\documentclass{article}
\usepackage{a4wide,amsmath,amsthm,epsfig,graphicx}
\usepackage{amsmath,amsthm,amssymb}
\usepackage{amsfonts}
\usepackage{graphics}

\usepackage{dsfont}
\usepackage{multirow,url}

\textwidth=15cm
\textheight=22cm
\oddsidemargin=0.9cm
\evensidemargin=0.5cm

\newtheorem{thm}{Theorem}[section]

\newtheorem{lemma}[thm]{Lemma}
\newtheorem{prop}[thm]{Proposition}

\newtheorem{remark}{Remark}

\newcommand{\rd}{\mathrm d}
\newcommand{\rE}{\mathrm E}
\newcommand{\ts}{\mathrm{TS}^p_\alpha}
\newcommand{\Ga}{\mathrm{Ga}}

\newcommand{\IBGM}{\mathrm{IBGM}}

\newcommand{\GGa}{\mathrm{GGa}}
\newcommand{\DGGa}{\mathrm{DGGa}}
\newcommand{\IGa}{\mathrm{IGa}}

\newcommand{\siid}{\stackrel{\mathrm{iid}}{\sim}}
\newcommand{\eqd}{\stackrel{d}{=}}

\newcommand{\sde}{\emph{SDE}}
\newcommand{\refeqq}[1]{~(\ref{#1})}
\newcommand{\myref}[1]{~\ref{#1}}
\newcommand{\mycite}[1]{~\cite{#1}}

\newcommand{\pdf}{\textit{pdf}}
\newcommand{\pmf}{\textit{pmf}}
\newcommand{\cdf}{\textit{cdf}}

\vsize=29.7cm
\hsize=20cm
\topmargin -1.5cm
\textheight 24cm
\oddsidemargin -0.5cm
\textwidth 15.2cm
\usepackage[a4 paper, top = 1.3 cm, bottom = 1.5 cm, left = 1.3 cm, right = 1.3 cm]{geometry}
\vsize=29.7cm \hsize=20cm \topmargin -1.5cm \textheight 24cm
\oddsidemargin +0.5cm \textwidth 15.2cm

\title{\Huge \textbf{Efficient Simulation of 
$p$-Tempered $\alpha$-Stable OU Processes}}

\author{Michael \textsc{Grabchak}\footnote{mgrabcha@uncc.edu}  \\
University of North Carolina Charlotte\\
Piergiacomo
\textsc{Sabino}\footnote{piergiacomo.sabino@eon.com}
\footnote{The views, opinions, positions or strategies expressed in this article are those of the authors
and do not necessarily represent the views, opinions, positions or strategies of, and should not be
attributed to E.ON SE.}
\\
Quantitative Risk Management, E.ON SE\\
\vspace{5pt}
 Br\"{u}sseler Platz 1, 45131 Essen, Germany and\\
Department of Mathematics and Statistics, \\University of Helsinki P.O.\ Box 68 FI-00014 Finland\\
}

\begin{document}
    \maketitle \thispagestyle{empty}

\begin{abstract}
We develop efficient methods for simulating processes of Ornstein-Uhlenbeck type related to the class of $p$-tempered $\alpha$-stable ($\ts$) distributions. Our results hold for both the univariate and multivariate cases and we consider both the case where the $\ts$ distribution is the stationary law and where it is the distribution of the background driving L\'evy process (BDLP). In the latter case, we also derive an explicit representation for the transition law as this was previous known only in certain special cases and only for $p=1$ and $\alpha\in[0,1)$.  Simulation results suggest that our methods work well in practice.
\end{abstract}

\section{Introduction}\label{sec:introduction}

Tempered stable processes of Ornstein Uhlenbeck type have been the subject of much research in recent years. They combine two important directions. First, they are examples of non-Gaussian processes of Ornstein Uhlenbeck type (OU processes), which have a more intricate dependence structure than the more commonly used L\'evy processes and, in particular, they are mean reverting. In financial applications this makes them natural models for various quantities including stochastic volatility, stochastic interest rates, and commodity prices, see, e.g., \cite{BNSh01}, \cite{BMBK07}, \cite{Sabino20} and \cite{Sabino21}. Second, they are based on the class of tempered stable (TS) distributions, which has been gaining in prominence over the last decade. These distributions can approximate the more common Gaussian and stable distributions, but their tail behavior is more realistic, which makes them useful for a variety of application areas. We are particularly motivated by their use in the modeling of financial returns, see \cite{Grabchak:Samorodnitsky:2010}, \cite{Xia:Grabchak:2022}, and the references therein. TS distributions were first formalized in the classic paper \cite{Rosinski07}. Since then, they have been extended in several directions in \cite{Rosinski:Sinclair:2010}, \cite{Bianchi:etal:2011}, and \cite{Grabchak12}, see also the monograph \cite{Grabchak16}.

There are two types of OU processes related to TS distributions. The first, denoted TSOU processes, correspond to the case where the stationary distribution is TS. The second, denoted OUTS processes, correspond to the case where the background driving L\'evy process (BDLP) has a TS distribution. The study of the transition laws of TSOU processes has been primarily focused on the fairly simple univariate class of so-called classical tempered stable (CTS) distributions, see \cite{Zhang:Zhang:2009}, \cite{Zhang11}, \cite{KawaiMasuda:2012}, \cite{QDZ21}, \cite{cs20_3}, and the references therein. More detailed results for the special cases of gamma and inverse Gaussian distributions are given in \cite{Zhang:Zhang:2008}, \cite{QDZ19}, and \cite{SabinoCufaro20}. Extensions to general classes of TS distributions, including the multivariate case, are given in \cite{Grabchak20} and \cite{Grabchak21}. Significantly less attention has been paid to the class of OUTS processes. To the best of our knowledge, in this case, the transition laws have only been studied in the case of gamma distributions in \cite{QDZ19} and \cite{SabinoCufaro20}, and in the case of CTS distributions with parameter $\alpha\in[0,1)$ in \cite{QDZ21} and \cite{cs20_3}. Although, it should be noted that some preliminary results about certain univariate TS distributions beyond CTS are given in \cite{QDZ21}.

The purpose of the current paper is two-fold. First, we derive the transition laws of OUTS processes for the class of $p$-tempered $\alpha$-stable ($\ts$) distributions with any $\alpha\in(-\infty,2)$ and $p>0$. These distributions form a large and flexible class of both univariate and multivariate models and include CTS distributions as a special case. Our results compliment those in \cite{Grabchak21} for the corresponding class of TSOU processes. Second, we develop efficient methods for simulating from the transition laws of both TSOU and OUTS processes based on $\ts$ distributions. These can then be used to simulate the corresponding OU process on a finite grid. Our simulation methods extend the ideas introduced in \cite{cs20_3} for CTS distributions and work well in practice. The main idea is based on showing that certain components of the transition law can be represented as generalized gamma scale mixtures (GGSMs). For this reason we develop the theory of such mixtures and give multiple approaches for simulation.

The rest of the paper is organized as follows. In Section \ref{sec: GGSM} we introduce GGSM distributions and discuss several examples that are important for simulating TSOU and OUTS processes. In Section \ref{sec: TS} we recall the definition of $\ts$ distributions and give some properties. Then in Section \ref{sec: OU processes} we recall basic properties of OU processes and, in particular, we give our results on the transition laws of OUTS processes. In Section \ref{sec:num:exp} we perform a series of numerical experiments to better understand the performance of our simulation methods. Proofs are postponed to Section \ref{sec: proofs}.

Before proceeding we introduce some notation. We write \cdf, \pdf, and \pmf\ for \emph{cumulative distribution function}, \emph{probability density function}, and \emph{probability mass function}, respectively. For a distribution $F$ we write $X\sim F$ to denote that $X$ is a random variable with distribution $F$ and we write $X_1,X_2,\dots\siid F$ to denote that $X_1,X_2,\dots$ are independently and identically distributed random variables with distribution $F$. For simplicity, instead of $F$, we sometimes write the corresponding \pdf\ or \pmf. We write $U(0,1)$ to denote a uniform distribution on $(0,1)$, $\delta_a$ to denote a point-mass at $a$, and $1_A$ to denote the indicator function on $A$. We write $\vee$ and $\wedge$ to denote the maximum and minimum, respectively, and we write $\lfloor\cdot\rfloor$ to denote the floor function. Further, we use the convention that $\sum_{n=1}^0$ is $0$. We write $\eqd$ and $=:$ to denote equality in distribution and a defining equality, respectively.

\section{Generalized Gamma Scale Mixtures}\label{sec: GGSM}

In this section we introduce the class of generalized gamma scale mixture (GGSM) distributions and discuss various properties and approaches for simulation. We then show that the incomplete gamma (IGa) distribution, which was introduced in \cite{Grabchak21} as an important component of the transition law of a TSOU process, is a GGSM and use this fact to develop efficient simulation techniques. Next, we introduce two new GGSM distributions, which play a similar role in the study of OUTS processes. We note that the results in this section may be of independent interest.

We begin by recalling that the generalized gamma distribution was introduced in \cite{Stacy:1962} and has a \pdf\ given by
$$
g_{\gamma,p,\theta}(u) = \frac{p\theta^{\gamma/p}}{\Gamma(\gamma/p)}u^{\gamma-1}e^{-u^p \theta},\ u>0,
$$
where $\gamma,p,\theta>0$ are parameters. We denote this distribution by $\GGa(\gamma,p,\theta)$. When $p=1$ it reduces to the usual gamma distribution, which we denote by $\Ga(\gamma,\theta)$. 
We can simulate from $\GGa(\gamma,p,\theta)$ by using the fact that
\begin{eqnarray}\label{eq: sim GGa}
\mbox{if }X\sim \Ga(\gamma/p,1)\mbox{, then }\left(X/\theta\right)^{1/p}\sim \GGa(\gamma,p,\theta),
\end{eqnarray}
see \cite{Grabchak20} and \cite{Grabchak21} for more on this distribution.

A GGSM distribution has a \pdf\ of the form
\begin{eqnarray}\label{eq: GGSM defn}
f(u) = \int_0^\infty  g_{\gamma,p,\theta^p}(u) m(\theta)\rd \theta, 
\end{eqnarray}
where $m$ is the \pdf\ of the so-called mixing distribution, whose support is contained in $[0,\infty)$. Note that, in \eqref{eq: GGSM defn}, the parameter is $\theta^p$ and not just $\theta$. To simulate from a GGSM we can first simulate $Z\sim m$ and then, given $Z$, simulate $X\sim  \GGa(\gamma,p,Z^p)$. Equivalently, using \eqref{eq: sim GGa}, we get the following Algorithm.\\

\noindent \textbf{Algorithm GGSM1.}\\
\textbf{Step 1.} Independently simulate $Y\sim \Ga(\gamma/p,1)$ and $Z\sim m$.\\
\textbf{Step 2.} Return $Y^{1/p}/Z$.\\ 
 
In practice, it is not always easy to simulate from $m$. Under general assumptions, which always hold in the situations that are of interest to us, we can set up a rejection sampling approach by using the following result, which follows immediately from \eqref{eq: GGSM defn}.

\begin{lemma}
If the support of $m$ is lower bounded by some $a>0$ and if $\int_a^\infty \theta^\gamma m(\theta)\rd \theta<\infty$, then
$$
f(u) \le V g_{\gamma,p,a^p}(u),
$$
where $V=a^{-\gamma} \int_a^\infty \theta^\gamma m(\theta)\rd \theta$.
\end{lemma}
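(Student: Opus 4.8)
The plan is to work directly from the mixture representation~\eqref{eq: GGSM defn} and to bound the integrand pointwise in $\theta$. First I would invoke the hypothesis that the support of $m$ is bounded below by $a$ to restrict the integral to $[a,\infty)$, so that $f(u) = \int_a^\infty g_{\gamma,p,\theta^p}(u)\, m(\theta)\,\rd\theta$. Next I would write out the integrand explicitly from the definition of the generalized gamma density, substituting the scale parameter $\theta^p$; since $(\theta^p)^{\gamma/p}=\theta^\gamma$, the normalizing constant collapses neatly and yields $g_{\gamma,p,\theta^p}(u) = \frac{p\theta^\gamma}{\Gamma(\gamma/p)}u^{\gamma-1}e^{-u^p\theta^p}$.

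The key step is a monotonicity bound on the exponential factor. Because $p>0$ and $\theta\ge a>0$ throughout the domain of integration, we have $\theta^p\ge a^p$ and hence $e^{-u^p\theta^p}\le e^{-u^p a^p}$ for every $u>0$. Replacing the exponential by this upper bound and recognizing that the resulting expression equals $(\theta/a)^\gamma g_{\gamma,p,a^p}(u)$, I obtain the pointwise inequality $g_{\gamma,p,\theta^p}(u)\le (\theta/a)^\gamma g_{\gamma,p,a^p}(u)$, which is the heart of the argument.

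Finally I would integrate this bound against $m$. Since $g_{\gamma,p,a^p}(u)$ carries no dependence on $\theta$, it factors out of the integral, leaving $f(u)\le g_{\gamma,p,a^p}(u)\, a^{-\gamma}\int_a^\infty \theta^\gamma m(\theta)\,\rd\theta = V g_{\gamma,p,a^p}(u)$, exactly as claimed; the finiteness assumption $\int_a^\infty\theta^\gamma m(\theta)\,\rd\theta<\infty$ is precisely what guarantees that $V<\infty$ and hence that the bound is nontrivial. I do not anticipate a genuine obstacle here, as the whole proof reduces to the one-line exponential estimate followed by an interchange-free integration; the only point demanding a moment's care is tracking how the scale parameter $\theta^p$ simplifies the normalizing constant to $\theta^\gamma$, which is what makes the factor $(\theta/a)^\gamma$ emerge cleanly.
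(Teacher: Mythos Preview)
Your argument is correct and is exactly the computation that the paper has in mind: the result is stated there as following immediately from the mixture representation~\eqref{eq: GGSM defn}, and the pointwise bound $e^{-u^p\theta^p}\le e^{-u^p a^p}$ for $\theta\ge a$ is precisely what makes it immediate. There is nothing to add.
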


Let 
$$
\varphi(u) = \frac{ \int_a^\infty e^{-u(\theta^p-a^p)}\theta^\gamma m(\theta)\rd \theta}{ \int_a^\infty \theta^\gamma m(\theta)\rd \theta}.
$$

\noindent \textbf{Algorithm GGSM2.}\\
\textbf{Step 1.} Independently simulate $U\sim U(0,1)$ and $Y\sim \Ga(\gamma/p,1)$.\\
\textbf{Step 2.} If $U\le \varphi(Y/a^p)$ return $Y^{1/p}/a$, otherwise go back to step 1.\\ 

From standard results about rejection sampling algorithms, the probability of rejection on a given iteration is given by $1/V$.

\subsection{Incomplete Gamma Distribution}\label{sec:iga:dist}

For $u>0$ and $\gamma>0$, let
$$
G_\gamma(u) = \frac{1}{\Gamma(\gamma)}\int_0^u x^{\gamma-1}e^{-x}\rd x
$$
be the scaled lower incomplete gamma function. It is the \cdf\ of the $\Ga(\gamma,1)$ distribution.  The incomplete gamma (IGa) distribution has a \pdf\ given by
$$
f_{\beta,\gamma,p,\eta}(u) = \frac{1}{K_{\beta,\gamma,p,\eta}} G_\gamma(u^p(\eta-1))e^{-u^p}u^{-1-\beta}, \qquad u>0,
$$
where $\gamma>0$, $p>0$, $\eta>1$, $\beta\in(-\infty,p\gamma)$, and $K_{\beta,\gamma,p,\eta}$ is a normalizing constant given by
$$
K_{\beta,\gamma,p,\eta} = \frac{\Gamma(\gamma-\beta/p)}{p\Gamma(\gamma)} K^*_{\beta,\gamma,p,\eta}
$$
with
\begin{eqnarray*}
K^*_{\beta,\gamma,p,\eta}=\int_{1/\eta}^1(1-x)^{\gamma-1}x^{-\beta/p-1}\rd x 
= \int_{1}^\eta (x-1)^{\gamma-1}x^{\beta/p-\gamma}\rd x .
\end{eqnarray*}
We denote this distribution by $\IGa(\beta,\gamma,p,\eta)$. In \cite{Grabchak21} it was shown that if $W\sim \IGa(\beta,\gamma,p,\eta)$ and $\xi>\beta-p\gamma$, then
\begin{eqnarray}\label{eq: moments of IGa}
\rE\left[W^\xi\right] = \frac{K_{(\beta-\xi),\gamma,p,\eta} }{K_{\beta,\gamma,p,\eta} }.
\end{eqnarray}
Further, Proposition 1 in that paper shows that
\begin{eqnarray}\label{eq: K* asymp}
K^*_{\beta,\gamma,p,\eta}\sim \frac{(\eta-1)^\gamma}{\gamma}\mbox{  as  }\eta\downarrow1.
\end{eqnarray}
We now show that the IGa distribution is a GGSM. When $p=1$ and $\gamma=1$, this was already observed in \cite{cs20_3}. 

\begin{lemma}\label{lemma: IGa is GGSM}
We have
\begin{eqnarray*}
f_{\beta,\gamma,p,\eta}(u) =\int_1^{\eta^{1/p}}  g_{(p\gamma-\beta),p,\theta^p}(u) m_{\beta,\gamma,p,\eta}(\theta)\rd \theta,
\end{eqnarray*}
where
\begin{eqnarray}\label{eq: m for IGa}
m_{\beta,\gamma,p,\eta}(\theta) = \frac{p}{K^*_{\beta,\gamma,p,\eta}} (\theta^p-1)^{\gamma-1}\theta^{p+\beta-p\gamma-1}, \ \ 1<\theta<\eta^{1/p}.
\end{eqnarray}
\end{lemma}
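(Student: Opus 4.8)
The proof is by direct computation: I would substitute the explicit form of the generalized gamma density into the integral, carry out the integration over $\theta$, and verify that the result coincides with $f_{\beta,\gamma,p,\eta}(u)$. Because the stated mixing density has support $(1,\eta^{1/p})$, all integrals are over a bounded interval and no convergence issues arise.

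First I would write out $g_{(p\gamma-\beta),p,\theta^p}(u)$ explicitly. With first parameter $p\gamma-\beta$ and scale $\theta^p$, and noting $(p\gamma-\beta)/p=\gamma-\beta/p$, this equals $\frac{p}{\Gamma(\gamma-\beta/p)}\theta^{p\gamma-\beta} u^{p\gamma-\beta-1} e^{-u^p\theta^p}$. Multiplying by $m_{\beta,\gamma,p,\eta}(\theta)$ the powers of $\theta$ collapse pleasantly, since $\theta^{p\gamma-\beta}\cdot\theta^{p+\beta-p\gamma-1}=\theta^{p-1}$. Thus all $u$-dependence factors out as $u^{p\gamma-\beta-1}$, and the $\theta$-integral reduces to $\int_1^{\eta^{1/p}}\theta^{p-1}(\theta^p-1)^{\gamma-1}e^{-u^p\theta^p}\rd\theta$, up to an explicit multiplicative constant.

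Next I would evaluate this integral by a chain of substitutions. Setting $s=\theta^p$ converts it to $\frac{1}{p}\int_1^\eta (s-1)^{\gamma-1}e^{-u^p s}\rd s$; then $t=s-1$ pulls out the factor $e^{-u^p}$ and yields $\int_0^{\eta-1} t^{\gamma-1}e^{-u^p t}\rd t$; finally $x=u^p t$ rewrites this as $u^{-p\gamma}\int_0^{u^p(\eta-1)} x^{\gamma-1}e^{-x}\rd x=u^{-p\gamma}\Gamma(\gamma)\,G_\gamma(u^p(\eta-1))$, by the definition of $G_\gamma$. The remaining $u$-powers combine as $u^{p\gamma-\beta-1}\cdot u^{-p\gamma}=u^{-\beta-1}$, and gathering the prefactors while invoking the identity $K_{\beta,\gamma,p,\eta}=\frac{\Gamma(\gamma-\beta/p)}{p\Gamma(\gamma)}K^*_{\beta,\gamma,p,\eta}$ collapses the constant to $1/K_{\beta,\gamma,p,\eta}$, reproducing exactly $\frac{1}{K_{\beta,\gamma,p,\eta}}G_\gamma(u^p(\eta-1))e^{-u^p}u^{-1-\beta}$.

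The computation is routine, so the only real difficulty is bookkeeping. The main obstacle is keeping the several exponents consistent—especially the cancellation between the shift $p\gamma-\beta$ in the first argument of $g$ and the power $p+\beta-p\gamma-1$ appearing in $m$—and matching the normalizing constant after the substitutions. A natural consistency check, which simultaneously confirms that the claimed $m_{\beta,\gamma,p,\eta}$ is a genuine \pdf, is to verify that it integrates to one over $(1,\eta^{1/p})$; this follows at once from the second expression given for $K^*_{\beta,\gamma,p,\eta}$ after the same substitution $s=\theta^p$.
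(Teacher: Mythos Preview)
Your proposal is correct and follows essentially the same approach as the paper: both arguments hinge on the single change of variables $x=u^p(\theta^p-1)$ linking the incomplete gamma integral to the $\theta$-integral, with you running the computation from the mixture side back to $f_{\beta,\gamma,p,\eta}$ while the paper runs it from $f_{\beta,\gamma,p,\eta}$ toward the mixture.
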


Since IGa is a GGSM, we can use Algorithms GGSM1 and GGSM2 to simulate from it. It is readily checked that if $Z\sim m_{\beta/p,\gamma,1,\eta}$, then $Z^{1/p}\sim m_{\beta,\gamma,p,\eta}$. This implies that Algorithm GGSM1 reduces to the following.\\

\noindent \textbf{Algorithm IGa1.} Simulation from $\IGa(\beta,\gamma,p,\eta)$.\\
\textbf{Step 1.} Independently simulate $Y\sim \Ga(\gamma-\beta/p,1)$ and $Z\sim m_{\beta/p,\gamma,1,\eta}$.\\
\textbf{Step 2.} Return $(Y/Z)^{1/p}$.\\

Next, we note that for the IGa distribution Algorithm GGSM2 reduces to the algorithm introduced in \cite{Grabchak21}. It can be stated as follows. Let
\begin{eqnarray*}
\varphi_1(u) &=&  \frac{ \int_1^{\eta^{1/p}} e^{-u(\theta^p-1)} (\theta^p-1)^{\gamma-1}\theta^{p-1}\rd \theta}{ \int_1^{\eta^{1/p}} (\theta^p-1)^{\gamma-1}\theta^{p-1}\rd \theta}
= \frac{ \Gamma(\gamma+1)}{  (\eta-1)^{\gamma}} G_\gamma((\eta-1)u) u^{-\gamma}.
\end{eqnarray*}

\noindent \textbf{Algorithm IGa2.} Simulation from $\IGa(\beta,\gamma,p,\eta)$.\\
\textbf{Step 1.} Independently simulate $U\sim U(0,1)$ and $Y\sim \Ga(\gamma-\beta/p,1)$.\\
\textbf{Step 2.} If $U\le \varphi_1(Y)$ return $Y^{1/p}$, otherwise go back to step 1.\\

 In \cite{Grabchak21} it is shown that, on a given iteration, the probability of acceptance is $1/V_1$, where $V_1 = \frac{(\eta-1)^\gamma}{\gamma K^*_{\beta,\gamma,p,\eta}}$. From \eqref{eq: K* asymp} it follows that $1/V_1\to1$ as $\eta\downarrow1$. As we will see, when simulating TSOU processes, we typically take $\eta$ close to $1$.

In order to use Algorithm IGa1, we need a way to simulate from $m_{\beta,\gamma,1,\eta}$. We will provide several algorithms for doing this. First, we introduce the \pdf
$$
\ell_{\delta,\eta} (\theta) = \frac{\delta +1}{\eta^{\delta +1} -1}\theta^{\delta} \quad 1<\theta<\eta,
$$
where $\delta\in\mathbb R$ and $\eta>1$ are parameters. Here and throughout we interpret  $\frac{\delta +1}{\eta^{\delta +1} -1}$ by its limiting value of $1/\ln\eta$ when $\delta=-1$. It is easy to check that we can simulate from this distribution as follows.\\

\noindent \textbf{Algorithm $\ell$1.} Simulation from $\ell_{\delta,\eta}$.\\
\textbf{Step 1.} Simulate $U\sim U(0,1)$.\\
\textbf{Step 2.} If $\delta=-1$ return $\eta^U$. Otherwise, return $\left[1 + U\,\left(\eta^{\delta+1} -1\right)\right]^{\frac{1}{\delta+1}}$.\\

Note that, when $\gamma=1$, we have $m_{\beta,1,1,\eta}=\ell_{\beta-1,\eta}$, which leads to the following algorithm.\\

\noindent \textbf{Algorithm M0.} Simulation from $m_{\beta,1,1,\eta}$.\\
\textbf{Step 1.} Simulate $U\sim U(0,1)$.\\
\textbf{Step 2.} If $\beta=0$ return $\eta^U$. Otherwise, return $\left(U(\eta^{\beta}-1)+1\right)^{1/\beta}$.\\

We now turn to the case $\gamma>1$. It is readily checked that
$$
m_{\beta,\gamma,1,\eta}(\theta) \le V_1^* \ell_{\beta- \gamma, \eta}(\theta), 
$$
where
$$
V_1^* = \frac{1}{K^*_{\beta,\gamma,1,\eta}} (\eta-1)^{\gamma-1}\frac{\eta^{\beta - \gamma +1} -1} {\beta-\gamma +1}. 
$$
Letting
$$
\varphi_1^*(y) = \left(\frac{y-1}{\eta-1}\right)^{\gamma-1}
$$
leads to the following algorithm.\\

\noindent \textbf{Algorithm M1.} Simulation from $m_{\beta,\gamma,1,\eta}$ with $\gamma>1$.\\
\textbf{Step 1.} Independently simulate $U\sim U(0,1)$ and $Y\sim \ell_{\beta- \gamma, \eta}$.\\
\textbf{Step 2.} If $U_2\le \varphi_1^*(Y)$ return $Y$, otherwise go back to step 1.\\

On a given iteration, the probability of acceptance is $1/V^*_1$. From \eqref{eq: K* asymp} it follows that $1/V^*_1\to1/\gamma$ as $\eta\downarrow1$. Thus, when $\eta$ is close to $1$ this method works better for smaller values of $\gamma>1$.

\begin{remark}
Alternatively, we can note that for $\gamma>1$ and $\beta\ne0$
$$
m_{\beta,\gamma,1,\eta}(\theta) \le V_{1,1}^* \ell_{\beta- 1, \eta}(\theta),
$$
where
$$
V_{1,1}^* = \frac{1}{K^*_{\beta,\gamma,1,\eta}} \frac{\eta^{\beta} -1} {\beta}.
$$
This can be used to develop another rejection sampling method. This method may work better than Algorithm M1 for some choices of the parameters. However, \eqref{eq: K* asymp} implies that $1/V_{1,1}^* \to0$ as $\eta\downarrow1$. As such it will not work well for the situation of interest.
\end{remark}

For the remaining methods we only consider the case where $\gamma\in\{1,2,3,\dots\}$ is an integer. In this case the binomial theorem gives
\begin{equation}
m_{\beta,\gamma,1,\eta}(\theta) =\frac{1}{K^*_{\beta,\gamma,1,\eta}} \sum_{k=0}^{\gamma-1}{\gamma-1\choose k}(-1)^{k}\theta^{-1-k+\beta},\qquad 1<y<\eta .
\label{eq:mix:ig}
\end{equation}
Integrating from $1$ to $y\in(1,\eta)$
we get the \cdf
\begin{eqnarray}\label{eq: cdf IGa}
M_{\beta,\gamma,1,\eta}(y) =\frac{1}{K^*_{\beta,\gamma,1,\eta}} \sum_{k=0}^{\gamma-1}{\gamma-1\choose k}(-1)^{k}\frac{y^{\beta-k}-1}{\beta-k},
\end{eqnarray}
where, in the case $k=\beta$, we interpret $\frac{y^{-k+\beta}-1}{\beta-k}$ by its limiting value of $\ln y$. Let $M_{\beta,\gamma,1,\eta}^{-1}$ be the inverse function of $M_{\beta,\gamma,1,\eta}$. This can be calculated numerically, which leads to the following algorithm.\\

\noindent \textbf{Algorithm M2.} Simulation from $m_{\beta,\gamma,1,\eta}$ with $\gamma\in\{1,2,3,\dots\}$.\\
\textbf{Step 1.} Simulate $U\sim U(0,1)$.\\
\textbf{Step 2.} Return $M_{\beta,\gamma,1,\eta}^{-1}(U)$.\\

When $\gamma=1$ the inverse function has a simple form and this algorithm reduces to Algorithm M0. Our last algorithm is based on the methodology in \cite{BdM1971}. The idea is to use the positive terms in \eqref{eq:mix:ig} to obtain the bound
\begin{eqnarray*}
m_{\beta,\gamma,1,\eta}(\theta) &\le& \frac{1}{K^*_{\beta,\gamma,1,\eta}} \sum_{k=0}^{\lfloor(\gamma-1)/2\rfloor}{\gamma-1\choose 2k}\theta^{-1-2k+\beta}\\ 
&=&V^*_{2}\sum_{k=0}^{\lfloor(\gamma-1)/2\rfloor}p_{\beta,\gamma,\eta}(k)\,\ell_{\beta-2k-1,\eta}(\theta)=: V_2^*\tilde{m}_{\beta,\gamma,1,\eta}(\theta),\quad 1<\theta<\eta,
\end{eqnarray*}
where
\begin{equation*}
V_2^* = \frac{\sum_{k=0}^{\lfloor(\gamma-1)/2\rfloor}H^*_{2k,\beta,\gamma,\eta}}{K^*_{\beta,\gamma,1,\eta}},
\end{equation*}
\begin{equation*}
p_{\beta,\gamma,\eta}(k)=\frac{H^*_{2k,\beta,\gamma,\eta}}{\sum_{k=0}^{\lfloor(\gamma-1)/2\rfloor}H^*_{2k,\beta,\gamma,1,\eta}}, \ \  \ k=0, \dots,  \lfloor(\gamma-1)/2\rfloor,
\end{equation*}
and
$$
H^*_{k,\beta,\gamma,\eta} = {\gamma-1\choose k}\frac{\eta^{\beta - k}-1}{\beta -k}, \ \ \ k=0, \dots,  \gamma-1.
$$
Using the binomial theorem, it can be checked that $K^*_{\beta,\gamma,1,\eta}=\sum_{k=0}^{\gamma-1}(-1)^kH^*_{k,\beta,\gamma,\eta}$, which guarantees that $V_2^*\ge1$. Clearly, $\sum_{k=1}^{ \lfloor(\gamma-1)/2\rfloor}p_{\beta,\gamma,\eta}(k)=1$ and thus $p_{\beta,\gamma,\eta}$ is a valid \pmf. It follows that $\tilde{m}_{\beta,\gamma,1,\eta}$ is a mixture distribution and we can simulate from it as follows.\\

\noindent  \textbf{Algorithm BD.} Simulation from $\tilde{m}_{\beta,\gamma,1,\eta}$.\\
\noindent\textbf{Step 1.}  Simulate $S \sim p_{\beta,\gamma,\eta}$.\\
\noindent\textbf{Step 2.}  Simulate $Y \sim \ell_{\beta-2S-1,\eta}$ and return $Y$.\\

Now, letting
$$
\varphi_2^*(y) = \frac{(\theta-1)^{\gamma-1}\theta^{1-\gamma}}{\sum_{k=0}^{\lfloor(\gamma-1)/2\rfloor}{\gamma-1\choose 2k}\theta^{-2k}}
$$
leads to the following algorithm.\\

\noindent \textbf{Algorithm M3.} Simulation from $m_{\beta,\gamma,1,\eta}$ with $\gamma\in\{2,3,\dots\}$.\\
\textbf{Step 1.} Independently simulate $U\sim U(0,1)$ and $Y\sim \tilde{m}_{\beta,\gamma,1,\eta}$.\\
\textbf{Step 2.} If $U\le \varphi_2^*(Y)$ return $Y$, otherwise go back to step 1.\\

On a given iteration, the probability of acceptance is $1/V^*_2$. From l'H\^opital's Rule and  \eqref{eq: K* asymp} it follows that $1/V^*_2\to0$ as $\eta\downarrow1$. Thus, this method does not work well when $\eta$ is close to $1$. However, it may work well in other cases.

\subsection{Incomplete Beta Gamma Mixture Distribution}\label{subsec:new:dist}

We now introduce a distribution, which is important for the simulation of OUTS processes. To the best of our knowledge this distribution has not been studied previously. It has a \pdf\ of the form
\begin{eqnarray*}
f^\sharp_{\beta,\gamma,p,\eta}(v) = \frac{1}{C_{\beta,\gamma,p,\eta}} v^{p\gamma-\beta-1} \int_1^{\eta} \theta^{p\gamma-\beta-1} e^{-v^p\theta^p} \int_{1/\theta}^1 (1-u^p)^{\gamma-1} u^{-1-\beta} \rd u \rd \theta, \ \ v>0,
\end{eqnarray*}
where $p>0$, $\gamma>0$, and $\beta\in(-\infty, p\gamma)$ are parameters and $C_{\beta,\gamma,p,\eta}$ is a normalizing constant. We call this an incomplete beta gamma mixture (IBGM) distribution and we denote it by $\IBGM(\beta,\gamma,p,\eta)$. It is readily seen that
\begin{eqnarray*}
C_{\beta,\gamma,p,\eta} &=& p^{-1}\Gamma(\gamma-\beta/p) C^*_{\beta,\gamma,p,\eta},
\end{eqnarray*}
where
\begin{eqnarray*}
C^*_{\beta,\gamma,p,\eta} = \int_1^\eta \theta^{-1}\int_{1/\theta}^1 (1-u^p)^{\gamma-1} u^{-1-\beta}\rd u\rd \theta
= \int_{1/\eta}^1 \ln\left( \eta u\right)(1-u^p)^{\gamma-1} u^{-1-\beta}\rd u.
\end{eqnarray*}
It can be checked that
$$
f^\sharp_{\beta,\gamma,p,\eta}(v) =\int_1^\eta g_{p\gamma-\beta,p,\theta^p}(v)m^\sharp_{\beta,\gamma,p,\eta}(\theta)\rd \theta
$$
where 
\begin{eqnarray*}
m^\sharp_{\beta,\gamma,p,\eta}(\theta) &=&  \frac{1}{C^*_{\beta,\gamma,p,\eta} } \theta^{-1} \int_{1/\theta}^1 (1-u^p)^{\gamma-1} u^{-1-\beta} \rd u\\ 
 &=&  \frac{1}{pC^*_{\beta,\gamma,p,\eta} } \theta^{-1} \int_{1/\theta^p}^1 (1-u)^{\gamma-1} u^{-1-\beta/p} \rd u, \ \ 1<\theta<\eta.
\end{eqnarray*}
Thus, this is a GGSM with mixing density $m^\sharp_{\beta,\gamma,p,\eta}$. The presence of the incomplete beta function in the mixing density gives the distribution its name. It is easily checked that if $X\sim m^\sharp_{\beta/p,\gamma,1,\eta^p}$ then $X^{1/p}\sim m^\sharp_{\beta,\gamma,p,\eta}$.

We only focus on the case where $\gamma\in\{1,2,3,\dots\}$ as the other values are not relevant for simulating OUTS processes. In this case, the binomial theorem gives
\begin{eqnarray}\label{eq: m sharp expansion}
m^\sharp_{\beta,\gamma,p,\eta}(\theta) =  \frac{1}{C^*_{\beta,\gamma,p,\eta} } \sum_{k=0}^{\gamma-1} {\gamma-1\choose k} (-1)^k \theta^{-1}\frac{1-\theta^{\beta-pk}}{pk-\beta}, \quad 1<\theta<\eta,
\end{eqnarray}
where we replace $\frac{1-\theta^{\beta-pk}}{pk-\beta}$ by its limiting value of $\ln\theta$ when $\beta=pk$. Integrating shows that the \cdf\ is
\begin{eqnarray}\label{eq: m sharp cdf expansion}
M^\sharp_{\beta,\gamma,p,\eta}(y) =  \frac{1}{C^*_{\beta,\gamma,p,\eta} } \sum_{k=0}^{\gamma-1} {\gamma-1\choose k} (-1)^{k}\frac{(pk-\beta)\ln y-1+y^{\beta-pk}}{(pk-\beta)^2}, \quad 1<y<\eta,
\end{eqnarray}
where we replace $\frac{(pk-\beta)\log y-1+y^{\beta-pk}}{(pk-\beta)^2}$ by its limiting value of $(\ln y)^2/2$ when $\beta=pk$. Similarly, we can check that
$$
C^*_{\beta,\gamma,p,\eta} =  \sum_{k=0}^{\gamma-1} {\gamma-1\choose k} (-1)^{k}
\frac{(pk-\beta)\log \eta-1+\eta^{\beta-pk}}{(pk-\beta)^2}.
$$

\begin{prop}\label{prop: moments of IBGM}
If $W\sim \IBGM(\beta,\gamma,p,\eta)$ with $\gamma\in\{1,2,3,\dots\}$, then for any $\xi>\beta-\gamma p$ we have
$$
\rE[W^\xi] = \frac{\Gamma\left(\gamma+(\xi-\beta)/p\right)}{\Gamma\left(\gamma-\beta/p\right) C^*_{\beta,\gamma,p,\eta}}
\sum_{k=0}^{\gamma-1} {\gamma-1\choose k} \frac{ (-1)^{k}}{pk-\beta}\left(\frac{1-\eta^{-\xi}}{\xi} + \frac{1-\eta^{\beta-pk-\xi} }{\beta-pk-\xi}\right),
$$
where we replace $\frac{1}{pk-\beta}\left(\frac{1-\eta^{-\xi}}{\xi} + \frac{1-\eta^{\beta-pk-\xi} }{\beta-pk-\xi}\right)$ by $\frac{1-\eta^{-\xi}(\xi\ln\eta +1)}{\xi^2}$ if $\beta=pk$.
\end{prop}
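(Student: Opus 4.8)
The plan is to exploit the GGSM representation $f^\sharp_{\beta,\gamma,p,\eta}(v)=\int_1^\eta g_{p\gamma-\beta,p,\theta^p}(v)\,m^\sharp_{\beta,\gamma,p,\eta}(\theta)\rd\theta$ established just before the statement, which writes $\IBGM(\beta,\gamma,p,\eta)$ as a continuous scale mixture of $\GGa(p\gamma-\beta,p,\theta^p)$ laws over $\theta\in(1,\eta)$. Since the integrand is nonnegative and $v^\xi\ge0$ on $(0,\infty)$, Tonelli's theorem lets me interchange the order of integration with no further justification, giving
$$
\rE[W^\xi]=\int_1^\eta\left(\int_0^\infty v^\xi g_{p\gamma-\beta,p,\theta^p}(v)\rd v\right)m^\sharp_{\beta,\gamma,p,\eta}(\theta)\rd\theta .
$$
Thus the task reduces to computing the $\xi$-th moment of a generalized gamma law and then integrating the result against the mixing density.

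For the inner integral I would use \eqref{eq: sim GGa}: if $X\sim\Ga(\gamma-\beta/p,1)$, then $X^{1/p}/\theta\sim\GGa(p\gamma-\beta,p,\theta^p)$, so its $\xi$-th moment equals $\theta^{-\xi}\rE[X^{\xi/p}]=\theta^{-\xi}\,\Gamma(\gamma+(\xi-\beta)/p)/\Gamma(\gamma-\beta/p)$ by the standard gamma moment formula. This is precisely where the hypothesis $\xi>\beta-\gamma p$ enters, as it guarantees that the argument $\gamma+(\xi-\beta)/p$ of the gamma function is positive, hence the moment is finite. Pulling the $\theta$-independent factor out front leaves
$$
\rE[W^\xi]=\frac{\Gamma\left(\gamma+(\xi-\beta)/p\right)}{\Gamma\left(\gamma-\beta/p\right)}\int_1^\eta\theta^{-\xi}\,m^\sharp_{\beta,\gamma,p,\eta}(\theta)\rd\theta .
$$

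To finish, I would substitute the binomial expansion \eqref{eq: m sharp expansion} of $m^\sharp_{\beta,\gamma,p,\eta}$ and integrate term by term. For each index $k$ with $\beta\ne pk$ the contribution reduces to $\int_1^\eta\bigl(\theta^{-\xi-1}-\theta^{\beta-pk-\xi-1}\bigr)\rd\theta$, and elementary antiderivatives produce exactly $\frac{1-\eta^{-\xi}}{\xi}+\frac{1-\eta^{\beta-pk-\xi}}{\beta-pk-\xi}$; attaching the factor $\binom{\gamma-1}{k}(-1)^k/(pk-\beta)$ together with the normalizer $1/C^*_{\beta,\gamma,p,\eta}$ and the gamma-ratio prefactor reproduces the claimed summand verbatim.

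The only genuinely separate piece of bookkeeping — and the place I expect the most care — is the exceptional index with $\beta=pk$, where both the corresponding term of \eqref{eq: m sharp expansion} and the closed form above are of the $0/0$ form. There I would invoke the $\ln\theta$ interpretation stipulated for \eqref{eq: m sharp expansion}, so the relevant integral becomes $\int_1^\eta\theta^{-\xi-1}\ln\theta\,\rd\theta$; a single integration by parts (or, equivalently, an l'H\^opital limit as $\beta\to pk$) yields $\frac{1-\eta^{-\xi}(\xi\ln\eta+1)}{\xi^2}$, which is exactly the stated replacement value. Collecting the regular and exceptional terms then gives the asserted formula.
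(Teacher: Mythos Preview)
Your proposal is correct and follows essentially the same route as the paper's proof: compute the $\xi$-th moment of $\GGa(p\gamma-\beta,p,\theta^p)$, condition on the mixing variable to reduce $\rE[W^\xi]$ to $\frac{\Gamma(\gamma+(\xi-\beta)/p)}{\Gamma(\gamma-\beta/p)}\int_1^\eta\theta^{-\xi}m^\sharp_{\beta,\gamma,p,\eta}(\theta)\,\rd\theta$, substitute the binomial expansion \eqref{eq: m sharp expansion}, and integrate term by term, treating the $\beta=pk$ index via the $\ln\theta$ interpretation. Your write-up is in fact slightly more careful than the paper's, explicitly invoking Tonelli for the interchange and noting that $\xi>\beta-\gamma p$ is exactly what makes the generalized-gamma moment finite.
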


We can use Algorithms GGSM1 and GGSM2 to simulate from the IBGM distribution. In this case Algorithm GGSM1 reduces to the following.\\

\noindent \textbf{Algorithm IBGM1.} Simulation from $\IBGM(\beta,\gamma,p,\eta)$.\\
\textbf{Step 1.} Independently simulate $Y\sim \Ga(\gamma-\beta/p,1)$ and $Z\sim m^\sharp_{\beta/p,\gamma,1,\eta^p}$.\\
\textbf{Step 2.} Return $\left(Y/Z\right)^{1/p}$.\\

To specialize Algorithm GGSM2 let $\varphi_2(u) = p^{-1}e^u\varphi_{2,n}(u)/\varphi_{2,d}(u)$, where 
\begin{eqnarray*}
\varphi_{2,n}(u) &=&  \sum_{k=0}^{\gamma-1} {\gamma-1\choose k} \frac{(-1)^{k}}{pk- \beta}u^{\beta/p-\gamma}
\int_u^{u \eta^p} e^{-\theta}\left(\theta^{\gamma-\beta/p-1}-\theta^{\gamma-k-1}u^{k-\beta/p}\right)\rd \theta
\end{eqnarray*}
and
\begin{eqnarray*}
\varphi_{2,d}(u) &=&  \sum_{k=0}^{\gamma-1} {\gamma-1\choose k} \frac{(-1)^{k}}{pk - \beta}\left(\frac{\eta^{p\gamma-\beta} - 1}{p\gamma-\beta}- 
\frac{\eta^{p(\gamma-k)} - 1}{p(\gamma-k)}\right).
\end{eqnarray*}
In the above, if  $k=\beta/p$, we replace the summand in $\varphi_{2,n}$ by
\begin{eqnarray*}
p^{-1}{\gamma-1\choose k} (-1)^{k}
\int_1^{\eta^p} e^{-u\theta}\theta^{\gamma-\beta/p-1}\ln \theta \rd \theta
\end{eqnarray*}
and the summand in $\varphi_{2,d}$ by
\begin{eqnarray*}
 {\gamma-1\choose k} (-1)^{k}\frac{\eta^{p\gamma-\beta}((p\gamma-\beta)\ln \eta - 1)+1}{(p\gamma-\beta)^2}.
\end{eqnarray*}
We note that for $k\ne \beta/p$ the integral in $\varphi_{2,n}$ can be written in terms of incomplete gamma functions. With this notation we can specialize Algorithm GGSM2 as follows.\\

\noindent \textbf{Algorithm IBGM2.} Simulation from $\IBGM(\beta,\gamma,p,\eta)$.\\
\textbf{Step 1.} Independently simulate $U\sim U(0,1)$ and $Y\sim \Ga(\gamma-\beta/p,1)$.\\
\textbf{Step 2.} If $U\le \varphi_2(Y)$ return $Y^{1/p}$, otherwise go back to step 1.\\
 
In this case, the probability of rejection is $1/V_2$, where
$$
V_2 = \frac{ \int_1^{\eta} \theta^{p\gamma-\beta-1} \int_{1/\theta}^1 (1-u^p)^{\gamma-1} u^{-1-\beta} \rd u \rd \theta}{ \int_1^\eta \theta^{-1}\int_{1/\theta}^1 (1-u^p)^{\gamma-1} u^{-1-\beta}\rd u\rd \theta}.
$$
Applying L'H\^opital's rule and Leibniz Rule shows that $V_2\to1$ as $\eta\downarrow1$.

We now turn to the problem of simulation from $m^\sharp_{\beta,\gamma,p,\eta}$. We begin with the important case when $\gamma=1$. In this case, $m^\sharp_{\beta,\gamma,p,\eta}$ does not depend on the parameter $p$. When $\gamma=1$ and $\beta=0$ we have 
$$
m^\sharp_{0,1,p,\eta}(\theta) = \frac{2 \log \theta}{\theta(\log \eta)^2}, \quad 1<\theta<\eta.
$$
It is easily checked that we can use the following algorithm in this case.\\

 \noindent \textbf{Algorithm M$^\sharp$0.} Simulation from $m^\sharp_{0,1,p,\eta}$.\\
\textbf{Step 1.} Simulate $U\sim U(0,1)$ and set $X= \eta^{\sqrt U}$.\\
\textbf{Step 2.} Return $X$.\\

When $\gamma=1$ and $\beta\ne0$ we get 
$$
m^\sharp_{\beta,1,p,\eta}(\theta) =  \frac{\beta}{\eta^{\beta}-\beta\log\eta-1} \frac{\theta^{\beta}-1}{\theta}, \quad 1<\theta<\eta.
$$
Simulation from this distribution was studied in \cite{QDZ21} and \cite{cs20_3}.  We follow the approach given in \cite{cs20_3}. The idea is to first observe that $X\sim m^\sharp_{\beta,1,p,\eta}$ can be represented as $X\eqd \eta^W$, where the \pdf\ of $W$ is
$$
f_W(w)=\frac{\beta\log\eta}{\eta^\beta - \beta\log\eta -1}\left(\eta^{\beta w} - 1\right), \quad 0\le w \le 1.
$$
Since $f_W$ is monotone and convex in $[0, 1]$, simulation can be done in a fast and efficient way by using the decomposition method illustrated in Section 4.3 of  \cite{Devroye86}.  
Let $0=w_0<w_1<\cdots<w_{L-1}<w_L=1$ for some positive integer $L$ and define a partition of $(0, 1]$ given by the disjoint intervals $\mathcal{I}_{\ell}=(w_{\ell-1},
w_{\ell}],\; \ell=1, \dots, L$. From the definition of a convex function, it follows that for each $\ell=1,2,\dots,L$
$$
f_W(w) \le \frac{f_W(w_{\ell}) - f_W(w_{\ell-1})}{w_{\ell} -
w_{\ell-1}}\,(w - w_{\ell-1}) + f_W(w_{\ell-1})=: g_{\ell}(w), \ \ w\in \mathcal{I}_{\ell}.
$$
Now let
\begin{align*}
    & \bar{g}_{\ell}(w) = \frac{g_{\ell}(w)}{q_{\ell}},\quad q_{\ell} =  \int_{\mathcal{I}_{\ell}} g_{\ell}(w)\rd w, \qquad
p_{\ell} = \frac{q_{\ell}}{V_L}, \qquad V_L = \sum_{\ell=1}^L q_{\ell}
\end{align*}
and note that each $\bar q_\ell$ is a \pdf\ and that
\begin{eqnarray}\label{eq: p}
p(\ell) = p_\ell, \ \ \ \ell=1,2,\dots, L
\end{eqnarray}
is a \pmf. With this notation we have
\begin{eqnarray}\label{eq: bound for f_W}
f_W(w) \le V_L\,\bar{g}_L(w),
\end{eqnarray}
where 
$$
\bar g_L(w) = \frac{1}{V_L}\sum_{\ell=1}^L g_{\ell}(w)\mathds{1}_{\mathcal{I}_{\ell}}(w) = \sum_{\ell=1}^L p_{\ell}\,\bar{g}_{\ell}(w)\mathds{1}_{\mathcal{I}_{\ell}}(w)
$$
is a \pdf. Using \eqref{eq: bound for f_W} we can set up a rejection sampling method for simulation from $f_W$. However, as observed in \cite{cs20_3}, the probability of acceptance can be made arbitrarily close to one when the intervals are of equal length and $L$ is large enough. Thus, in this case, we can skip the rejection sampling step and just use the approximation $f_W \approx\bar{g}_L$. Simulation results in \cite{cs20_3}, suggest that this approximation is very fast and works very well. It is almost exact. This leads to the following approximate simulation method for $m^\sharp_{\beta,1,p,\eta}$ with $\beta\ne0$.\\

\noindent \textbf{Algorithm M$^\sharp$-CS.} Approximate simulation from $m^\sharp_{\beta,1,p,\eta}$ with $\beta\ne0$.\\
\textbf{Step 1.} Simulate $\ell\sim p$, where $p$ is the \pmf\ in \eqref{eq: p}.\\
\textbf{Step 2.} Simulate $W_\ell\sim \bar g_\ell$.\\
\textbf{Step 3.} Return $X=\eta^{W_\ell}$.\\

In the above, simulation from $\bar g_\ell$ is straightforward as the \pdf\ is a linear function and we can use the inverse transform method, as its \cdf\ has a simple form. In practice we used the the \emph{random.triangular} routine in the \emph{numpy} package for \emph{Python}. We now turn to the case $\gamma>1$. A simple brute force approach is to numerically invert the formula for $M^\sharp_{\beta,\gamma,p,\eta}$ as given in \eqref{eq: m sharp cdf expansion}. Denoting this inverse function by $(M^\sharp_{\beta,\gamma,p,\eta})^{-1}$, leads to the following algorithm.\\
 
 \noindent \textbf{Algorithm M$^\sharp$1.} Simulation from $m^\sharp_{\beta,\gamma,p,\eta}$ with $\gamma\in\{1,2,3,\dots\}$.\\
\textbf{Step 1.} Simulate $U\sim U(0,1)$.\\
\textbf{Step 2.} Return $(M^\sharp_{\beta,\gamma,p,\eta})^{-1}(U)$.\\
 
 We also develop a rejection sampling algorithm, which follows from the fact that for $\gamma>1$
 $$
 m^\sharp_{\beta,\gamma,p,\eta}(\theta) \le \frac{C^*_{\beta,1,p,\eta}}{C^*_{\beta,\gamma,p,\eta}}m^\sharp_{\beta,1,p,\eta}(\theta).
 $$
Let 
$$
\varphi^\sharp_{1}(u) = \frac{\beta}{u^\beta-1}\sum_{k=0}^{\gamma-1} {\gamma-1\choose k} (-1)^k \frac{1-u^{\beta-pk}}{pk-\beta}
$$
where we interpret $\frac{\beta}{u^\beta-1}$ as $1/\ln u$ when $\beta=0$ and $\frac{1-u^{\beta-pk}}{pk-\beta}$ as $\ln u$ when $\beta=pk$. \\

\noindent \textbf{Algorithm M$^\sharp$2.} Simulation from $m^\sharp_{\beta,\gamma,p,\eta}$ with $\gamma\in\{2,3,\dots\}$.\\
\textbf{Step 1.} Independently simulate $U\sim U(0,1)$ and $Y\sim m^\sharp_{\beta,1,p,\eta}$.\\
\textbf{Step 2.} If $U\le \varphi^\sharp_1(Y)$ return $Y$, otherwise go back to step 1.\\

In this case the probability of acceptance on any given iteration is given by $\frac{C^*_{\beta,\gamma,p,\eta}}{C^*_{\beta,1,p,\eta}}$. Applying L'H\^opital's rule and Leibniz Rule shows that this approaches $0$ as $\eta\downarrow1$. Nevertheless, in simulations we found that this methods  works well for choices of $\eta$ that are not too close to $1$.

We end this section by noting that one can derive another rejection sampling algorithm by taking only the positive terms of \eqref{eq: m sharp expansion} as was done for the IGa law. Unfortunately, in this case, simulation from the law of the normalized positive sum is not straightforward as it requires an additional rejection sampling step. For this reason we do not to consider this approach here.

\subsection{Difference Generalized Gamma Distribution}\label{sec: DGGa}

Let $F$ be the \cdf\ of some distribution with support contained in $[0,\infty)$ and consider the function
$$
f(x) = \frac{F(\eta x)-F(x)}{x\log \eta} = \frac{1}{x\log \eta}\int_{(x,x\eta]} \rd F(v), \ \ x>0
$$
for some $\eta>0$. Is is readily checked that this is a \pdf. Such \pdf's arise in the study of the transition laws of OU processes, where the BDLP is compound Poisson, see \cite{Zhang:Sheng:Deng:2011}. We are interested in the case where $F$ is the \cdf\ of the $\GGa(\gamma,p,1)$ distribution. In this case, the \pdf\ becomes
\begin{eqnarray*}
h_{\gamma,p,\eta}(x) = \frac{1}{x\log \eta} \int_x^{x \eta} g_{\gamma,p,1} (u) \rd u = \frac{1}{\log \eta} \int_1^{ \eta} g_{\gamma,p,1} (\theta x) \rd \theta.
\end{eqnarray*}
We call this the Difference Generalized Gamma Distribution and denote it by $\DGGa(\gamma,p,\eta)$. It is readily checked that
\begin{eqnarray*}
h_{\gamma,p,\eta}(x) =  \int_1^{ \eta} g_{\gamma,p,\theta^p} ( x)m_{0,1,1,\eta}(\theta) \rd \theta,
\end{eqnarray*}
where
$$
m_{0,1,1,\eta}(\theta) = \frac{1}{\log\eta} \theta^{-1} \ \ 1<\theta<\eta.
$$
is a special case of the \pdf\ given in \eqref{eq: m for IGa}. We can use Algorithm M0 to simulate from $m_{0,1,1,\eta}$ and we can combine this with Algorithm GGSM1 to simulate from $\DGGa(\gamma,p,\eta)$.

\section{$p$-Tempered $\alpha$-Stable Distributions}\label{sec: TS}

Fix $\alpha\in(-\infty,2)$ and $p>0$. A $p$-tempered $\alpha$-stable ($\ts$) distribution $\mu$ on $\mathbb R^d$ has a characteristic function of the form $\hat\mu(z) = e^{C_\mu(z)}$, where
$$
c_\mu(z) = i\langle b,z\rangle + \int_{\mathbb R^d}\int_0^\infty \left(e^{it\langle x,z\rangle}-1-it\langle x,z\rangle 1_{[\alpha\ge1]}\right) t^{-1-\alpha}e^{-t^p} \rd t R(\rd x),
$$
$b\in\mathbb R^d$, and $R$ is a finite Borel measure on $\mathbb R^d$ satisfying $R(\{0\})=0$ and
\begin{eqnarray*}
\int_{|x|>2} |x|^\alpha R(\rd x) <\infty && \mbox{ if }\alpha\in(0,2)\setminus\{1\}\\
\int_{|x|>2} |x|\log|x| R(\rd x) <\infty && \mbox{ if }\alpha=1\\
\int_{|x|>2} \log|x| R(\rd x) <\infty && \mbox{ if }\alpha=0.
\end{eqnarray*}
No additional assumptions on $R$ are needed when $\alpha<0$. We denote this distribution by $\ts(R,b)$. We call $b$ the shift and $R$ the Rosi\'nski measure after the author of \cite{Rosinski07}. One can consider extensions to certain cases where $R$ is not a finite measure (see \cite{Grabchak16}), but we will not do so here. The class of $\ts$ distributions with $p=1$ and $\alpha\in(0,2)$ was introduce in \cite{Rosinski07} and the class with $p=2$ and $\alpha\in[0,2)$ was introduced in \cite{Bianchi:etal:2011}. The general class was introduced in \cite{Grabchak12}.

Every $p$-tempered $\alpha$-stable distribution is infinitely divisible and the L\'evy measure of $\ts(R,b)$ is given by
\begin{eqnarray}\label{eq: Levy measure of TS}
M(B) = \int_{\mathbb R^d}\int_0^\infty 1_B(xt) t^{-1-\alpha}e^{-t^p}\rd t R(\rd x), \ \ \ B\in\mathfrak B(\mathbb R^d)
\end{eqnarray}
where $\mathfrak B(\mathbb R^d)$ denotes the Borel sets on $\mathbb R^d$. Formulas for the cumulants of $\ts$ distributions are given in Theorem 2.16 of \cite{Grabchak16}. For simplicity we only recall the formulas in the one-dimensional ($d=1$) case. In this case for distribution $\ts(R,b)$ if $\int_{|x|>1}|x|^k R(\rd x)<\infty$, then the $k$th cumulant exists and is given for $\alpha\in(-\infty,1)$ by
\begin{eqnarray}\label{eq: cum TS}
\mathrm{c}_k = p^{-1}\Gamma\left(\frac{k-\alpha}{p}\right)\int_{\mathbb R}x^k R(\rd x) + 1_{[k=1]}b.
\end{eqnarray}
If $\alpha\in[1,2)$, then this formula still holds for $k\ge2$, but for $k=1$ it is given by $\mathrm{c}_1=b$. For more on $p$-tempered $\alpha$-stable distributions and their associate L\'evy processes see \cite{Grabchak16} and the references therein. 

While we present our results for general Rosi\'nski measures $R$, we are especially interest in the class of so-called $p$-rapidly decreasing tempered stable ($p$-RDTS) distributions, see \cite{Grabchak21a}. These correspond to the case where the dimension $d=1$ and the Rosi\'nski measure is of the form $R(\rd x) = c\beta^\alpha \delta_{1/\beta}(\rd x)$ for some $c,\beta>0$. In this case, after a change of variables, we get
$$
c_\mu(z) = ib z + c \int_0^\infty \left(e^{itz}-1-itz 1_{[\alpha\ge1]}\right) t^{-1-\alpha}e^{-(\beta t)^p} \rd t.
$$
If one understands these distributions, then one can easily extend to the bilateral case, where $R(\rd x) = c_-\beta_-^\alpha \delta_{-1/\beta_-}(\rd x)+c_+\beta_+^\alpha \delta_{1/\beta_+}(\rd x)$ for some $c_-,c_+,\beta_-,\beta_+>0$.  When $p=1$ these are sometimes called classical tempered stable (CTS) distributions. 
\section{OU Processes}\label{sec: OU processes}

Let $L=\{L_t:t\ge0\}$ be a L\'evy process on $\mathbb R^d$. Fix $\lambda>0$ and define a process $Y=\{Y_t:t\ge0\}$ as the strong solution of the stochastic differential equation (\sde)
            \begin{equation*}
              \rd Y_t =  -\lambda Y_t\rd t + \rd L_t, \qquad Y_0=Y\ \ a.s. 
    \end{equation*}
This process can be written as
\begin{equation*}
Y_t = Y_0\,e^{-\lambda t} + \int_0^t e^{-\lambda\,(t-s)}\rd L_s.
\end{equation*}
We say that $Y$ is an OU process with parameter $\lambda$ and that $L$ is the BDLP.  We refer to the distribution of $L_1$ as the BDLP distribution. Every OU process is a Markov process and, so long as $\rE[\log(|L_1|\vee e)]<\infty$, the process has a stationary (also sometimes called a limiting) distribution. An OU process whose stationary distribution is $\ts$ is called a TSOU process and an OU process whose BDLP distribution is $\ts$ is called an OUTS process. 

In the remainder of this section we study the transition laws of both TSOU and OUTS processes. We begin by giving formulas for their cumulants. For simplicity we focus on the one-dimensional ($d=1$) case. In \cite{SabinoCufaro20} a simple formula relating the cumulants of the transition law of an OU process and those of the stationary law are provided. Specifically, it is shown there that, if the stationary law has a finite $k$th cumulant, then so does the transition law and, in this case, the $k$th cumulant of the conditional distribution of $Y_{s+t}$ given $Y_s=y$ is given by
$$
\hat c_{k,t} = ye^{-\lambda t}1_{[k=1]}+(1-e^{-k\lambda t})c_{k},
$$
where $c_k$ is the $k$th cumulant of the stationary law. Thus, when $d=1$ and the stationary law is $\ts(R,b)$, if $\int_{|x|>1}|x|^k R(\rd x)<\infty$, then
\begin{eqnarray}\label{eq: cum TSOU}
\hat c_{k,t} = \left(ye^{-\lambda t} + (1-e^{-k\lambda t})b\right)1_{[k=1]} + (1-e^{-k\lambda t})p^{-1}\Gamma\left(\frac{k-\alpha}{p}\right)\int_{\mathbb R}x^k R(\rd x).
\end{eqnarray}
The one exception to this formula is that when $k=1$ and $\alpha\in[1,2)$ we have $\hat c_{1,t} = \left(ye^{-\lambda t} + (1-e^{-k\lambda t})b\right)$. We now turn to the cumulants of the transition law of an OUTS process. In this case, combining Proposition 3.12 in \cite{ContTankov2004} with Lemma 17.1 in \cite{Sato} shows that, so long as the $k$th cumulant of the BDLP distribution exists, the $k$th cumulant of the conditional distribution of $Y_{s+t}$ given $Y_s=y$ exists and is given by
$$
\check c_{k,t} = ye^{-\lambda t}1_{[k=1]}+\frac{1-e^{-k\lambda t}}{k\lambda}c_{k},
$$
where $c_k$ is the $k$th cumulant of the BDLP distribution. Thus, when $d=1$ and the BDLP distribution is $\ts(R, b)$, if  $\int_{|x|>1}|x|^k R(\rd x)<\infty$, then
\begin{eqnarray}\label{eq: cum OUTS}
\check c_{k,t} = \left(ye^{-\lambda t}+ \frac{1-e^{-k\lambda t}}{k\lambda} b\right)1_{[k=1]} +  \frac{1-e^{-k\lambda t}}{k\lambda p} \Gamma\left(\frac{k-\alpha}{p}\right)\int_{\mathbb R}x^k R(\rd x).
\end{eqnarray}
We must again modify this formula when $k=1$ and $\alpha\in[1,2)$. In this case we have $\check c_{1,t} = \left(ye^{-\lambda t} + (1-e^{-k\lambda t})(k\lambda)^{-1}b\right)$. 

We now give explicit representations for the transition laws of both TSOU and OUTS processes and discuss simulation. These are given in $d$-dimensions. We begin with TSOU processes. Since only selfdecomposable distributions can serve is stationary distributions of OU processes, we only consider the case when $\alpha\in[0,2)$ as $\ts$ distributions are not selfdecomposable when $\alpha<0$, see Proposition 3.14 in \cite{Grabchak16}. The following result is given in \cite{Grabchak21}.

\begin{thm}\label{thrm: main}
Let $Y=\{Y_t:t\ge0\}$ be a TSOU process with parameter $\lambda>0$ and stationary distribution $\ts(R,b)$ with $p>0$, $\alpha\in[0,2)$, and $R\ne0$. Set $\gamma=1+\lfloor\alpha/p\rfloor$. If $t>0$, then, given $Y_s=y$, we have
\begin{eqnarray}\label{eq: trans RV:ptsou}
Y_{s+t} \eqd e^{-\lambda t} y + (1-e^{-\lambda t})b - \sum_{n=0}^{\gamma-1}b_n + X_0 + e^{-\lambda t} \sum_{n=1}^{\gamma-1}X_n + \sum_{j=n}^{N} V_n W_n, 
\end{eqnarray}
where $b_0,\dots,b_{\gamma-1}\in\mathbb R^d$ are constants and $N, X_0, X_1,\dots,X_{\gamma-1}$, $V_1, V_2, \dots$, $W_1, W_2, \dots$ are independent random variables with:\\
1. $X_0\sim \ts(R_0,0)$ with $R_0(\rd x) = (1-e^{-\alpha \lambda t})R(\rd x)$,\\
2. if $\gamma\ge2$ then $X_n \sim \mathrm{TS}^p_{\alpha-np}(R_n,0)$ with $R_n(\rd x) = \frac{1}{n!}(1-e^{-p \lambda t})^n R(\rd x)$ for $n=1,2,\dots,(\gamma-1)$,\\
3. $V_1, V_2, \dots\siid R^1$, where $R^1(\rd x)=R(\rd x)/R(\mathbb R^{d})$,\\
4. $W_1, W_2, \dots\siid\IGa(\alpha,\gamma,p,e^{p\lambda t})$,\\
5. $N$ has a Poisson distribution with mean $e^{-\alpha \lambda t}R(\mathbb R^d)K_{\alpha,\gamma,p,e^{p\lambda t}}$,\\
6.
$$
b_0 = \left\{\begin{array}{ll} e^{-\alpha\lambda t} \int_{\mathbb R^{d}} x  R(\rd x) K_{(\alpha-1),\gamma,p,e^{p\lambda t}} 
& \alpha\in[1,2)\\
0 & \alpha\in[0,1)
\end{array}\right.,
$$
and if $\gamma\ge2$ then for $n=1,2,\dots,(\gamma-1)$
$$
b_n =  \left\{\begin{array}{ll} 
e^{-\lambda t} \int_{\mathbb R^{d}}x R_n(\rd x)  p^{-1}\Gamma\left(\frac{1-\alpha+np}{p}\right)   & 1\le \alpha<1+ np\\
0 & \mbox{otherwise}
\end{array}\right..
$$
\end{thm}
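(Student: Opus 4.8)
The plan is to reduce the claim to a single characteristic–function computation for the stochastic integral in the OU representation, and then to unfold that computation algebraically. Writing $\kappa=e^{-\lambda t}$ and $\eta=e^{p\lambda t}=\kappa^{-p}$, I would first use $Y_{s+t}=\kappa Y_s+\int_s^{s+t}e^{-\lambda(s+t-u)}\rd L_u$ together with the independence and stationarity of the increments of the BDLP $L$ to conclude that, conditionally on $Y_s=y$,
$$
Y_{s+t}\eqd \kappa y + J,\qquad J:=\int_0^t e^{-\lambda(t-u)}\rd L_u ,
$$
so it suffices to identify the law of $J$. Because the stationary law $\mu=\ts(R,b)$ (selfdecomposable precisely for $\alpha\in[0,2)$) is the law of $\int_0^\infty e^{-\lambda u}\rd L_u$, the characteristic exponents obey $C_J(z)=C_\mu(z)-C_\mu(\kappa z)$. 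This identity is the only structural input I would need.

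The heart of the proof is decomposing $C_\mu(z)-C_\mu(\kappa z)$. Substituting $s=\kappa t$ in the inner integral of $C_\mu(\kappa z)$ rescales the tempering and pulls out a factor $\kappa^\alpha=\eta^{-\alpha/p}$, giving
$$
C_J(z)=i\langle(1-\kappa)b,z\rangle+\int_{\mathbb R^d}\!\int_0^\infty\!\big(e^{it\langle x,z\rangle}-1-it\langle x,z\rangle 1_{[\alpha\ge1]}\big)t^{-1-\alpha}\big(e^{-t^p}-\eta^{-\alpha/p}e^{-\eta t^p}\big)\rd t\,R(\rd x).
$$
Exploiting that $\gamma=1+\lfloor\alpha/p\rfloor$ is an integer, so that $G_\gamma(u)=e^{-u}\sum_{k\ge\gamma}u^k/k!$, one checks the elementary split
$$
e^{-t^p}-\eta^{-\alpha/p}e^{-\eta t^p}=(1-\eta^{-\alpha/p})e^{-t^p}+\eta^{-\alpha/p}e^{-\eta t^p}\sum_{n=1}^{\gamma-1}\frac{(t^p(\eta-1))^n}{n!}+\eta^{-\alpha/p}e^{-t^p}G_\gamma\big(t^p(\eta-1)\big).
$$
The three summands are the tempering signatures of the three independent families. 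The first yields $X_0\sim\ts(R_0,0)$ with $R_0=(1-\eta^{-\alpha/p})R$. Running $s=\kappa t$ backwards turns the $n$-th polynomial-times-$e^{-\eta t^p}$ term (where $t^{-1-\alpha}t^{np}=t^{-1-(\alpha-np)}$) into the exponent of $\kappa X_n$ with $X_n\sim\mathrm{TS}^p_{\alpha-np}(R_n,0)$, $R_n=\tfrac1{n!}(1-\kappa^p)^nR$ (here $\alpha-np\ge0$ since $n\le\lfloor\alpha/p\rfloor$). The incomplete–gamma term, namely $\int_{\mathbb R^d}\int_0^\infty(e^{iw\langle x,z\rangle}-1)\,\eta^{-\alpha/p}G_\gamma(w^p(\eta-1))e^{-w^p}w^{-1-\alpha}\rd w\,R(\rd x)$, is recognized as a compound–Poisson exponent whose jump law is that of $VW$ with $V\sim R^1$ and $W\sim\IGa(\alpha,\gamma,p,\eta)$; matching the constant $\eta^{-\alpha/p}G_\gamma(\cdot)e^{-w^p}w^{-1-\alpha}$ to $c\,f_{\alpha,\gamma,p,\eta}(w)R^1$ fixes the intensity $c=e^{-\alpha\lambda t}R(\mathbb R^d)K_{\alpha,\gamma,p,\eta}$, which is item $5$.

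The last step is to reconcile the compensation conventions, which produces the deterministic shifts and is the fiddly part. The target exponent carries $1_{[\alpha\ge1]}$, whereas the $n$-th $\ts$ piece naturally carries $1_{[\alpha-np\ge1]}$ and the compound–Poisson term carries no compensation at all, so wherever these disagree I must add or subtract a linear-in-$z$ term, i.e.\ translate by a constant. For the $n$-th piece the correction is $\kappa$ times its first cumulant, $e^{-\lambda t}\,p^{-1}\Gamma\!\big(\tfrac{1-\alpha+np}{p}\big)\int x\,R_n(\rd x)$ (from $\int_0^\infty t\cdot t^{-1-(\alpha-np)}e^{-t^p}\rd t=p^{-1}\Gamma((1-\alpha+np)/p)$ and \eqref{eq: cum TS}), active exactly when $1\le\alpha<1+np$; for the Poisson piece the correction is the mean $\rE[N]\rE[V]\rE[W]$ of the sum, which by \eqref{eq: moments of IGa} equals $b_0$. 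Subtracting $b_0,\dots,b_{\gamma-1}$ and assembling $\kappa y+(1-\kappa)b$ with the three families reproduces \eqref{eq: trans RV:ptsou}. I expect the main obstacle to be precisely this bookkeeping: keeping the three distinct compensation thresholds, the index shifts $\alpha\mapsto\alpha-np$, and the scaling by $\kappa$ mutually consistent, since the purely analytic split via $G_\gamma$ is short once the integrality of $\gamma$ is used.
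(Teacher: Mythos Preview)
Your argument is correct and is essentially the natural route to the result: the selfdecomposability identity $C_J(z)=C_\mu(z)-C_\mu(\kappa z)$, the rescaling $s=\kappa t$ that turns $C_\mu(\kappa z)$ into an integral with tempering $e^{-\eta t^p}$, the three-term split of $e^{-t^p}-\eta^{-\alpha/p}e^{-\eta t^p}$ via the closed form $G_\gamma(u)=1-e^{-u}\sum_{n=0}^{\gamma-1}u^n/n!$ for integer $\gamma$, and the compensation bookkeeping all check out. In particular, your verification that the $n$-th polynomial term, after the back-substitution $s=\eta^{1/p}t$, becomes the exponent of $\kappa X_n$ with Rosi\'nski measure $\frac{1}{n!}(1-\kappa^p)^nR$, and that the compound-Poisson intensity and the shifts $b_0,\dots,b_{\gamma-1}$ fall out of the mismatch between $\psi_\alpha$, $\psi_{\alpha-np}$, and $\psi_0$, is accurate.

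Note, however, that the present paper does not prove this theorem at all: it is quoted verbatim from \cite{Grabchak21} (``The following result is given in \cite{Grabchak21}''), and no argument is given here. So there is nothing in this paper to compare your proof against. Your sketch is, in spirit, the argument one expects to find in that reference, and it is closely parallel to the proof the paper \emph{does} give for the companion OUTS result (Lemma~\ref{lemma:second int}), which also isolates an explicit kernel in the Lebesgue integral and then splits it using a Taylor remainder controlled by Lemma~1 of \cite{Grabchak21}. The main difference is the starting point: there one begins from the BDLP L\'evy measure and integrates over time, whereas you start from the stationary exponent and subtract its $\kappa$-dilation; for TSOU the latter is cleaner because the stationary law is the given $\ts$ object.
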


Note that when $\alpha=0$ we have $\gamma=1$, $b_0=0$, and $X_0=0$ with probability one, thus the transition law is essentially just compound Poisson. Note further, that the $\IGa$ distribution needed in the theorem has parameter $\eta=e^{p\lambda t}$. When simulating a TSOU process on a finite grid, one typically takes a small time step $t>0$. Thus one often uses a value of $\eta$ that is close to $1$. Next, we turn to OUTS processes. In this case we can allow for any $\alpha\in(-\infty,2)$. To the best of our knowledge the transition law has not been studied previously in this case, except for CTS and closely related distributions and only in the one-dimensional case with $\alpha\in[0,1)$. 

\begin{thm}\label{thrm: main bdlp}
Let $Y=\{Y_t:t\ge0\}$ be an OUTS process with parameter $\lambda>0$ and BDLP distribution $\ts(\lambda R,\lambda b)$ with $p>0$, $\alpha\in(-\infty,2)$, and $R\ne0$. If $\alpha\in(0,2)$ set $\gamma=1+\lfloor\alpha/p\rfloor$, otherwise set $\gamma=1$. If $t>0$, then, given $Y_s=y$, we have
\begin{eqnarray}\label{eq: trans RV: oupts}
Y_{s+t} \eqd e^{-\lambda t} y + (1-e^{-\lambda t})b - \sum_{n=0}^{\gamma-1}b^*_n + e^{-\lambda t} \sum_{n=0}^{\gamma-1}X_n + \sum_{n=1}^{N} V_n W_n, 
\end{eqnarray}
where $b_0,\dots,b_{\gamma-1}\in\mathbb R^d$ are constants and $N, X_0, X_1,\dots,X_{\gamma-1}$, $V_1, V_2, \dots$, $W_1, W_2, \dots$ are independent random variables with:\\
1. $X_0\sim \ts(R^*_0,0)$ with $R^*_0(\rd x) = \frac{e^{\alpha\lambda t}-1}{\alpha}R(\rd x)$,\\
2. if $\gamma\ge2$ then $X_n \sim \mathrm{TS}^p_{\alpha-np}(R^*_n,0)$ with $R^*_n(\rd x) = \kappa_{\lambda,t,n}R(\rd x)$ and $\kappa_{\lambda,t,n} =\int^1_{e^{-\lambda t}}\frac{(1-u^p)^n}{n!} u^{-1-\alpha} \rd u $,  for $n=1,2,\dots,(\gamma-1)$,\\
3. $V_1, V_2, \dots\siid R^1$, where $R^1(\rd x)=R(\rd x)/R(\mathbb R^{d})$,\\
4. $W_1, W_2, \dots\siid \IBGM(\alpha,\gamma,p,e^{\lambda t})$,\\
5. $N$ has a Poisson distribution with mean $\frac{pC_{\alpha,\gamma,p,e^{\lambda t}} R(\mathbb R^d)}{(\gamma-1)!}$,\\
6.
$$
b^*_0 = \left\{\begin{array}{ll}  \frac{pC_{\alpha,\gamma,p,e^{\lambda t}} R(\mathbb R^d)}{(\gamma-1)!} \rE[V_1]\rE[W_1]
& \alpha\in[1,2)\\
0 & \alpha<1
\end{array}\right.,
$$
and if $\gamma\ge2$ then for $n=1,2,\dots,(\gamma-1)$
$$
b^*_n =  \left\{\begin{array}{ll} 
e^{-\lambda t} \int_{\mathbb R^{d}}x R^*_n(\rd x)  p^{-1}\Gamma\left(\frac{1-\alpha+np}{p}\right)   & 1\le \alpha<1+ np\\
0 & \mbox{otherwise}
\end{array}\right..
$$
\end{thm}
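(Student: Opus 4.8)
The plan is to reduce the statement to computing the law of the stochastic integral $Z_t := \int_0^t e^{-\lambda(t-u)}\rd L_u$, since the OU representation gives $Y_{s+t} = e^{-\lambda t}Y_s + \int_s^{s+t} e^{-\lambda(s+t-u)}\rd L_u$ and, by stationarity and independence of the increments of the BDLP $L$, conditionally on $Y_s=y$ we have $Y_{s+t}\eqd e^{-\lambda t}y + Z_t$. Combining Proposition 3.12 in \cite{ContTankov2004} with Lemma 17.1 in \cite{Sato} (the same tools used for \eqref{eq: cum OUTS}), the characteristic exponent of $Z_t$ is $\int_0^t \psi(e^{-\lambda v}z)\rd v$, where $\psi$ is the exponent of $L_1\sim\ts(\lambda R,\lambda b)$. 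First I would substitute the Rosi\'nski form of $\psi$, rescale the inner stable variable by $e^{-\lambda v}$ via $s=e^{-\lambda v}t'$, and then set $w=e^{\lambda v}$, to show that $Z_t$ is infinitely divisible with drift $(1-e^{-\lambda t})b$ (under the $1_{[\alpha\ge1]}$ centering of \eqref{eq: Levy measure of TS}) and a L\'evy measure of $\ts$ type whose density in the magnitude variable $s$ is $s^{-1-\alpha}T(s)$ with $T(s)=\int_1^{\eta} w^{-\alpha-1}e^{-s^p w^p}\rd w$ and $\eta=e^{\lambda t}$.

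Next I would show that the right-hand side of \eqref{eq: trans RV: oupts} reproduces exactly this triplet. The scaled pieces $e^{-\lambda t}X_n$ are $\mathrm{TS}^p_{\alpha-np}$ laws, and scaling by $e^{-\lambda t}=1/\eta$ shows their combined radial density is $D_1(s)=e^{-s^p\eta^p}\sum_{n=0}^{\gamma-1}\eta^{np-\alpha}\kappa_{\lambda,t,n}s^{np-\alpha-1}$. The compound Poisson term $\sum_{n=1}^N V_nW_n$, with $V_n\sim R^1$, $W_n\sim\IBGM(\alpha,\gamma,p,\eta)$, and Poisson mean $pC_{\alpha,\gamma,p,\eta}R(\mathbb R^d)/(\gamma-1)!$, contributes a radial density $D_2(s)=\frac{p}{(\gamma-1)!}C_{\alpha,\gamma,p,\eta}f^\sharp_{\alpha,\gamma,p,\eta}(s)$; because $f^\sharp$ does not depend on $x$, the jump law factors into a vector direction-and-scale drawn from $R^1$ and an independent scalar magnitude from $\IBGM$, which is what legitimizes the compound Poisson representation. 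The crux is then the identity $D_1(s)+D_2(s)=s^{-1-\alpha}T(s)$. I would prove it by integrating $T$ by parts in $w$, which gives the recursion $\alpha\Phi_\alpha(r)=e^{-r}-\eta^{-\alpha}e^{-r\eta^p}-rp\,\Phi_{\alpha-p}(r)$ for $\Phi_\beta(r)=\int_1^\eta w^{-\beta-1}e^{-rw^p}\rd w$ with $r=s^p$. Iterating it $\gamma=1+\lfloor\alpha/p\rfloor$ times lowers the index past $0$, so the final integral is a finite measure; the endpoint terms at $w=\eta$ assemble into the $e^{-s^p\eta^p}$-tempered pieces $X_n$ (with constants $\kappa_{\lambda,t,n}$), the endpoint terms at $w=1$ cancel, and expanding $(1-u^p)^{\gamma-1}$ by the binomial theorem identifies the leftover double integral with the incomplete-beta structure defining $f^\sharp$. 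I have verified that for $\gamma=1$ the recursion already yields the identity exactly.

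For the shift I would treat $\alpha<1$ and $\alpha\in[1,2)$ separately. When $\alpha<1$ each piece is of finite variation, the drift is simply $(1-e^{-\lambda t})b$, and no correction is needed ($b^*_n=0$). When $\alpha\in[1,2)$ the centering $1_{[\alpha\ge1]}$ must be redistributed among the pieces: the compound Poisson part has a finite first moment, so rewriting it without its compensator produces $b^*_0=\frac{pC_{\alpha,\gamma,p,\eta}R(\mathbb R^d)}{(\gamma-1)!}\rE[V_1]\rE[W_1]$, with $\rE[W_1]$ supplied by Proposition \ref{prop: moments of IBGM}; and for each $X_n$ of index $\alpha-np<1$ the mismatch between the two centering conventions yields $b^*_n$ through the first-cumulant formula \eqref{eq: cum TS}, exactly as in the TSOU case of Theorem \ref{thrm: main}. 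Matching both drift and L\'evy measure then identifies the law of $e^{-\lambda t}y+Z_t$ with that of the right-hand side of \eqref{eq: trans RV: oupts} by uniqueness of the characteristic function.

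The main obstacle is the identity $D_1+D_2=s^{-1-\alpha}T$ for general $\gamma$: keeping track, through the $\gamma$-fold integration by parts, of which boundary contributions cancel and which survive, and showing that the surviving remainder is precisely the incomplete-beta double integral in $f^\sharp_{\alpha,\gamma,p,\eta}$, rather than merely some finite measure of the same total mass. The secondary difficulty is the bookkeeping of the centering constants $b^*_n$ when $\alpha\in[1,2)$, where one must confirm that the compensator of the compound Poisson part and the centering corrections of the $X_n$ combine to give exactly $\sum_{n=0}^{\gamma-1}b^*_n$.
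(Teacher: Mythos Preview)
Your overall strategy---compute the characteristic exponent of $Z_t=\int_0^t e^{-\lambda(t-u)}\rd L_u$, identify its L\'evy triplet, and match it piece by piece to the right-hand side of \eqref{eq: trans RV: oupts}---is exactly what the paper does. The starting formula $s^{-1-\alpha}T(s)$ with $T(s)=\int_1^\eta w^{-1-\alpha}e^{-s^pw^p}\rd w$ is correct (the paper reaches the same expression in \eqref{eq: for proof OUTS}, written as $v^{-1}\int_v^{v\eta}u^{-1-\alpha}e^{-u^p}\rd u$). The treatment of the drift and of the $b^*_n$ is also in line with the paper.

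The gap is in your proposed mechanism for the key identity $D_1+D_2=s^{-1-\alpha}T$. Your recursion $\alpha\Phi_\alpha(r)=e^{-r}-\eta^{-\alpha}e^{-r\eta^p}-rp\,\Phi_{\alpha-p}(r)$ is correct, but iterating it does \emph{not} produce the decomposition as you describe. After $\gamma$ steps the $w=\eta$ boundary terms have the form $c_n\,\eta^{np-\alpha}s^{np-1-\alpha}e^{-s^p\eta^p}$ with $c_n=\frac{(-p)^n}{\alpha(\alpha-p)\cdots(\alpha-np)}$, which are \emph{not} the constants $\kappa_{\lambda,t,n}=\int_{e^{-\lambda t}}^1\frac{(1-u^p)^n}{n!}u^{-1-\alpha}\rd u$; already for $n=1$ one has $c_1=\tfrac{-p}{\alpha(\alpha-p)}$ while $\kappa_{\lambda,t,1}=\tfrac{\eta^\alpha-1}{\alpha}-\tfrac{\eta^{\alpha-p}-1}{\alpha-p}$. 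Likewise the $w=1$ boundary terms do not cancel among themselves, and the remainder is a \emph{single} integral $\Phi_{\alpha-\gamma p}$, not the double integral defining $f^\sharp$. Your $\gamma=1$ check actually shows this: the identity only closes because part of the IBGM integral (the $\theta^{p-1}$ piece of $\int_1^\eta(\theta^{p-1}-\theta^{p-\alpha-1})e^{-s^p\theta^p}\rd\theta$) absorbs the $w=1$ boundary term and simultaneously supplies the missing part of the $X_0$ coefficient. For general $\gamma$ that reconciliation becomes a genuine combinatorial identity that your sketch does not address.

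The paper sidesteps this entirely. Instead of integration by parts it writes, for each $u\in[1,\eta]$, the Taylor expansion of $w\mapsto e^{-w}$ at the \emph{endpoint} $w=v^p\eta^p$, evaluated at $w=(vu)^p$:
\[
e^{-(vu)^p}=e^{-v^p\eta^p}\sum_{n=0}^{\gamma-1}\frac{(\eta^p-u^p)^n}{n!}v^{np}
+\frac{1}{(\gamma-1)!}\int_0^{v^p(\eta^p-u^p)}e^{-(x+v^pu^p)}x^{\gamma-1}\rd x,
\]
citing Lemma~1 of \cite{Grabchak21} for the integral remainder. Integrating the $n$-th summand over $u\in[1,\eta]$ produces $\kappa_{\lambda,t,n}$ \emph{directly}, with no boundary bookkeeping; two substitutions in the remainder ($y^p=v^{-p}x+u^p$, then $s=u/y$) and one interchange of the $u$- and $y$-integrals give precisely $\tfrac{pC_{\alpha,\gamma,p,\eta}}{(\gamma-1)!}f^\sharp_{\alpha,\gamma,p,\eta}(v)$. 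If you want to keep your integration-by-parts route you will need to prove the additional identity that repackages the $c_n$, the $w=1$ terms, and $\Phi_{\alpha-\gamma p}$ into $\kappa_{\lambda,t,n}$ and $f^\sharp$; the Taylor-remainder approach is shorter.
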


In the theorem and its proof, when $\alpha=0$, we interpret $\frac{e^{\alpha\lambda t}-1}{\alpha}$ by its limiting value of $t\lambda$. We can, of course, state the theorem for the case where the  BDLP distribution is $\ts(R, b)$ instead of $\ts(\lambda R,\lambda b)$. However, the formulas would be a bit more complicated and we do not do so here. 

Note that Theorem \ref{thrm: main bdlp} holds even if the OUTS process does not have a stationary distribution. A stationary distribution exists if and only if
$$
\int_{|x|>2} \log |x| M(\rd x)<\infty,
$$
where $M$ is the L\'evy measure of $\ts(R,b)$. A simple sufficient condition is  
$$
\int_{|x|>2}|x|^\epsilon R(\rd x)<\infty \mbox{ for some } \epsilon>0.
$$
Under our assumptions, this always holds for $\alpha\in(0,2)$, see \cite{Grabchak16}. 

While Theorem \ref{thrm: main bdlp} holds for any $\alpha\in(-\infty,2)$, when $\alpha<0$ we can get a significantly simpler representation as, in this case, $\ts$ distributions are simply compound Poisson (with drift). In the one-dimensional case, a general representation of the transition law of an OU process with a compound Poisson BDLP is given in \cite{Zhang:Sheng:Deng:2011}. Although we cannot use those results directly as we are in $d$-dimensions, our results are related to the ones in that paper.

\begin{thm}\label{thrm: main bdlp alpha neg}
Let $Y=\{Y_t:t\ge0\}$ be an OUTS process with parameter $\lambda>0$ and BDLP distribution $\ts(\lambda R,\lambda b)$ with $p>0$, $\alpha\in(-\infty,0)$, and $R\ne0$. If $t>0$, then, given $Y_s=y$, we have
\begin{eqnarray}\label{eq: trans RV^:oupts:neg}
Y_{s+t} \eqd e^{-\lambda t} y + (1-e^{-\lambda t})b +\sum_{n=1}^{N} V_n W_n, 
\end{eqnarray}
where $N$, $V_1, V_2, \dots$, $W_1, W_2, \dots$ are independent random variables with:\\
1. $V_1, V_2, \dots\siid R^1$, where $R^1(\rd x)=R(\rd x)/R(\mathbb R^{d})$,\\
2. $W_1, W_2, \dots\siid \DGGa(|\alpha|,p,e^{\lambda t})$,\\
3. $N$ has a Poisson distribution with mean $p^{-1}\lambda t \Gamma(|\alpha|/p)R(\mathbb R^d)$.
\end{thm}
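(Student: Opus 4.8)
The plan is to exploit the fact that, since $\alpha<0$, the BDLP $\ts(\lambda R,\lambda b)$ is compound Poisson with drift: its L\'evy measure $\lambda M$, with $M$ as in \eqref{eq: Levy measure of TS}, is \emph{finite}, with total mass $\lambda M(\mathbb R^d)=\lambda p^{-1}\Gamma(|\alpha|/p)R(\mathbb R^d)$ (using $\int_0^\infty t^{-1-\alpha}e^{-t^p}\rd t=p^{-1}\Gamma(|\alpha|/p)$ via the substitution $s=t^p$), and its drift is $\lambda b$. First I would start from the integral representation $Y_{s+t}=Y_s e^{-\lambda t}+\int_s^{s+t}e^{-\lambda(s+t-u)}\rd L_u$. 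The integral depends only on the increments of $L$ after time $s$ and is therefore independent of $Y_s$, so conditionally on $Y_s=y$ we may simply replace $Y_s$ by $y$; by stationarity of the increments, $\int_s^{s+t}e^{-\lambda(s+t-u)}\rd L_u\eqd\int_0^t e^{-\lambda(t-u)}\rd L_u$, which I will analyze in place of the original integral.

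Next I would split this integral into its drift and jump contributions. The drift gives $\lambda b\int_0^t e^{-\lambda(t-u)}\rd u=(1-e^{-\lambda t})b$, which together with $e^{-\lambda t}y$ reproduces the deterministic part of \eqref{eq: trans RV^:oupts:neg}. The jump contribution is $\sum e^{-\lambda(t-u)}x$, summed over the points $(u,x)$ of the Poisson random measure on $[0,t]\times(\mathbb R^d\setminus\{0\})$ with intensity $\rd u\,\lambda M(\rd x)$. By the mapping theorem, the image of this measure under $(u,x)\mapsto e^{-\lambda(t-u)}x$ is again Poisson, with intensity $\nu_t$ equal to the pushforward of $\lambda\,\rd u\,M(\rd x)$; since $\nu_t$ is finite, the jump part is exactly a compound Poisson sum $\sum_{n=1}^{N}J_n$ with $N\sim$ Poisson$(\nu_t(\mathbb R^d))$ and $J_1,J_2,\dots\siid\nu_t/\nu_t(\mathbb R^d)$, independent of $N$. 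A direct computation gives $\nu_t(\mathbb R^d)=\lambda t\,M(\mathbb R^d)=p^{-1}\lambda t\,\Gamma(|\alpha|/p)R(\mathbb R^d)$, which is precisely the Poisson mean in item~3.

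The crux is to identify the jump law $\nu_t/\nu_t(\mathbb R^d)$ as that of $V_1 W_1$ with $V_1\sim R^1$ and $W_1\sim\DGGa(|\alpha|,p,e^{\lambda t})$. Using the radial form \eqref{eq: Levy measure of TS} of $M$, a $\nu_t$-distributed point equals $e^{-\lambda(t-u)}\,y\,r$, where $u\sim U(0,t)$, $y\sim R^1$, and $r$ has density proportional to $r^{-1-\alpha}e^{-r^p}$, i.e.\ $r\sim\GGa(|\alpha|,p,1)$, all independent. Since scaling acts on the whole vector, $e^{-\lambda(t-u)}\,y\,r=y\cdot\bigl(e^{-\lambda(t-u)}r\bigr)$, so I would set $V_1:=y\sim R^1$ and $\Theta:=e^{\lambda(t-u)}$. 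A change of variables shows $\Theta$ has density $\frac{1}{\log\eta}\theta^{-1}$ on $(1,\eta)$ with $\eta=e^{\lambda t}$, which is exactly the mixing density $m_{0,1,1,\eta}$ appearing in Section~\ref{sec: DGGa}. Hence $W_1:=e^{-\lambda(t-u)}r=r/\Theta$ is, by \eqref{eq: sim GGa} and Algorithm GGSM1, distributed as $\DGGa(|\alpha|,p,e^{\lambda t})$. Independence of $V_1$ and $W_1$, and jointly across the iid marks, is inherited from the product structure of the intensity, and the conclusion follows.

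I expect the main obstacle to be the bookkeeping in this last step: cleanly turning the two-variable intensity $\rd u\,M(\rd x)$ into a product of the direction law $R^1$ and a one-dimensional scale mixture of $\GGa(|\alpha|,p,1)$, and then recognizing the mixing factor $\Theta=e^{\lambda(t-u)}$ as $m_{0,1,1,\eta}$ so that the DGGa representation applies verbatim. One could instead try to specialize Theorem~\ref{thrm: main bdlp} at $\gamma=1$, but there the contribution near the origin appears as a separate $\ts(R^*_0,0)$ factor $X_0$, which for $\alpha<0$ is itself compound Poisson and would have to be merged into the single sum; the direct computation above avoids this extra reconciliation.
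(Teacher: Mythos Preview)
Your argument is correct and complete; the bookkeeping you flag as the potential obstacle is indeed straightforward once one writes $\Theta=e^{\lambda(t-u)}$ and checks its density is $\theta^{-1}/\ln\eta$ on $(1,\eta)$.

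However, your route differs from the paper's. The paper works entirely at the level of characteristic functions: it reuses the computation from Lemma~\ref{lemma:second int} up to the identity \eqref{eq: for proof OUTS}, obtaining
\[
C_t(y,z)=ie^{-\lambda t}\langle y,z\rangle+i(1-e^{-\lambda t})\langle b,z\rangle+\int_{\mathbb R^d}\int_0^\infty\psi_\alpha(z,xv)\,v^{-1}\!\int_v^{ve^{\lambda t}}u^{-1-\alpha}e^{-u^p}\rd u\,\rd v\,R(\rd x),
\]
and then simply observes that the inner factor $v^{-1}\int_v^{ve^{\lambda t}}u^{-1-\alpha}e^{-u^p}\rd u$ equals $p^{-1}\lambda t\,\Gamma(|\alpha|/p)\,h_{|\alpha|,p,e^{\lambda t}}(v)$ by the very definition of the $\DGGa$ density in Section~\ref{sec: DGGa}. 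This recognizes the integrand as a compound Poisson exponent with jump law $R^1\otimes\DGGa$ in one line. Your approach, by contrast, is pathwise: you exploit the compound Poisson structure of the BDLP (available since $\alpha<0$), apply the mapping theorem to the jump measure, and identify the scale mixture $r/\Theta$ directly with the $\DGGa$ law via its GGSM representation. What your approach buys is a more probabilistic and self-contained argument that does not pass through Fourier analysis or the machinery of Lemma~\ref{lemma:second int}; what the paper's approach buys is uniformity with the proofs of Theorems~\ref{thrm: main} and~\ref{thrm: main bdlp}, and a shorter derivation once \eqref{eq: for proof OUTS} is in hand.
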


Our main goal in studying the transition laws is to use them to simulate the corresponding TSOU or OUTS process on a finite grid. To do this, we need a way to simulate from the transition law, or equivalently from the various components of this law. We have already discussed the simulation of $\IGa$, $\IBGM$, and $\DGGa$ distributions in Section \ref{sec: GGSM}. There is no one approach for simulating from $R^1$ as it can be, essentially, any probability measure on $\mathbb R^d$. However, when simulating specifically TSOU processes, there is a way to avoid simulating from $R^1$. In this case one can directly simulate the product $V_iW_i$, where $V_i\sim R^1$ and $W_i$ has an $\IGa$ distribution, see \cite{Grabchak21}.

The remaining components of the transition law are $\mathrm{TS}^p_{\alpha-np}$ for $n=0,1,\dots,\gamma-1$. There are several approaches for simulating from these distributions. First, one can use the inverse transform method, which requires one to numerically invert the \cdf. While this method can work well, the fact that there is no closed formula for the \cdf s of $\ts$ distributions makes this method impractical in many cases. Second, under mild assumptions, one can use the rejection sampling approach of \cite{Grabchak19}. However, this method requires one to numerically calculate \pdf s, which may also be computationally intensive. A third approach is to use an approximate method based on truncating an infinite series representation. A number of such representations appear in the literature, see \cite{Rosinski07}, \cite{Rosinski:Sinclair:2010}, \cite{Imai:Kawai:2011}, or \cite{Bianchi:etal:2011}. We note that several of the methods discussed here are easier to implement in the univariate case. An approach for extending univariate simulation methods of $\ts$ random variables to the multivariate case is given in \cite{Xia:Grabchak:2022}. Finally, we note that numerical methods for simulation and the evaluation of \pdf s and \cdf s of certain classes of symmetric $\ts$ distribution can be found in the SymTS package \cite{Grabchak:Cao:2017} for the statistical software R.

\begin{remark}\label{remark: pRDST sim}
We are particularly interested in the class of $p$-RDTS distributions, which correspond to the case where the dimension $d=1$ and $R(\rd x) =c\beta^\alpha\delta_{1/\beta}(\rd x)$ for some $c,\beta>0$. In this case simulation of the various components of the transition law is fairly simple. First, we have $R^1(\rd x) = \delta_{1/\beta}(\rd x)$ and thus if $V\sim R^1$ then $V=1/\beta$ with probability $1$. Second, a simple method for simulating from $\ts(R,b)$ is given in \cite{Grabchak21a} for the case $\alpha<1$ and $p>1$. Finally, when $p=1$, this class reduces to the class of CTS distributions. Exact simulation methods for CTS distributions are well known and can be found in, e.g., \cite{Devroye:2009}, \cite{Hofert:2011}, \cite{Kawai:Masuda:2011}, and the references therein.
\end{remark}

\section{Numerical Experiments}\label{sec:num:exp}

In this section we illustrate and compare the performance and effectiveness of the simulation algorithms discussed in this paper. All simulations were conducted using \emph{Python} with a $64$-bit Intel Core i7-7500U CPU @270-290 GHz, 8GB. We first investigate the performance of the simulation methods for the IGa distribution as described in Section\myref{sec:iga:dist} and then the simulation methods for the IBGM distribution presented in Section\myref{subsec:new:dist}. Finally, we focus on the generation of TSOU and OUTS processes on a finite grid. To ensure that we are simulating from the correct distributions, we compare the empirical moments to the true moments. For simplicity, for the OU processes we compare the cumulants instead of the moments. To see how close the empirical values are to the true values, we consider the relative errors as given by
\begin{equation*}
   \text{err}\; \% = \frac{\text{true value - estimated value}}{\text{true value}} \times 100\%.
\end{equation*}

\subsection{Results for IGa}

In this section we compare the performance of four methods for simulating from an IGa distribution, which are discussed in Section \ref{sec:iga:dist}. Three of them are new and use Algorithm IGa1 in conjunction with an algorithm for simulating from $m_{\beta,\gamma,p,\eta}$. 
We denote these by \emph{ARGS}, \emph{Inverse}, and \emph{ARBD} and for $\gamma\ge2$ they use Algorithms M1, M2, and M3, respectively. When $\gamma=1$, Algorithms M1 and M2 are no longer meaningful and Algorithm M2 reduces to Algorithm M0. For this reason, when $\gamma=1$ we use Algorithm M0 for all three methods. The fourth method uses Algorithm IGa2 and is denoted \emph{ARG}. It was introduced in \cite{Grabchak21}. 

Simulation using the \emph{Inverse} method when $\gamma\ge 2$ depends on the numerical inversion of the \cdf\ given in \refeqq{eq: cdf IGa}. This, in turn, depends on an initial guess which can, of course, affect the final computation time. Instead of blindly taking the middle term $(\eta^{1/p}+1)/2$, we choose the initial guess equal to the random variate drawn from the corresponding distribution with $\gamma=1$. In other words, we start with the value returned by Algorithm M0. Calculating this value is fast and its impact on the overall computation time is negligible.

 As discussed in Section\myref{sec:iga:dist}, without loss of generality we take $p=1$. For the other parameters, we take $\beta=0.9$,  $\eta\in\{1.1, 2\}$, and $\gamma\in\{1, 2, 3, 4, 5, 10\}$. The choice of the $\eta$'s stems from the fact that when simulating a TSOU process on a finite grid one often needs $\eta>1$ close to $1$. Table\myref{tab:iga:times} shows the computation times for the four methods. We use the method \emph{ARGS} as the baseline and for it all values are given in seconds, while the values for the other methods are given as multiplicative factors with respect to it. We can see that the new \emph{ARGS} method performed the fastest, while the new \emph{ARBD} method performed the slowest. Further, the new methods \emph{ARGS} and \emph{Inverse} performed significantly faster than the \emph{ARG} method of \cite{Grabchak21}. 

To ensure that the methods are simulating from the correct distributions, Table\myref{tab:iga:moments} shows the comparison between the first four true moments computed in \eqref{eq: moments of IGa} and the empirical moments estimated based on $R=5\times 10^4$ simulated values. In the interest of space, we only present the results for $p=1$, $\beta=0.9$, and $\eta=2$. In all cases the \emph{err \%} is small suggesting that all methods are simulating from the correct distributions.

\begin{table}[ht!]
    \centering\footnotesize
        \resizebox{\textwidth}{!}{
        \begin{tabular}{|c|c|cc|cc|cc|cc|cc|cc|}
				             \hline
				& & \multicolumn{2}{c|}{$\gamma=1$} & \multicolumn{2}{c|}{$\gamma=2$} & \multicolumn{2}{c|}{$\gamma=3$} & \multicolumn{2}{c|}{$\gamma=4$} & \multicolumn{2}{c|}{$\gamma=5$} & \multicolumn{2}{c|}{$\gamma=10$}\\
										\hline
Method & $R$ & $\eta_1$ & $\eta_2$ & $\eta_1$ & $\eta_2$ & $\eta_1$ & $\eta_2$ & $\eta_1$ & $\eta_2$ & $\eta_1$ & $\eta_2$ & $\eta_1$ & $\eta_2$\\										 \hline\hline 			

\multirow{ 4}{*}{ARGS (sec)} &
$1000$ & $0.00020$ & $0.00040$ & $0.0030$ & $0.0080$ & $0.0049$ & $0.0100$ & $0.0080$ & $0.0170$ & $0.0120$ & $0.0269$ & $0.0289$ & $0.6902$\\
& $10000$ & $0.00125$ & $0.00289$ & $0.0498$ & $0.0549$ & $0.0678$ & $0.0858$ & $0.0817$ & $0.1685$ & $0.1067$ & $0.2822$ & $0.2604$ & $6.8407$\\
& $20000$ & $0.00247$ & $0.00598$ & $0.1054$ & $0.0858$ & $0.1197$ & $0.1685$ & $0.1576$ & $0.3311$ & $0.2055$ & $0.5586$ & $0.5037$ & $13.559$\\
& $50000$ & $0.00669$ & $0.00740$ & $0.2042$ & $0.2105$ & $0.2942$ & $0.4488$ & $0.4019$ & $0.7660$ & $0.4957$ & $1.4192$ & $1.2825$ & $38.205$\\
                    \hline\hline
\multirow{ 4}{*}{Inverse} &
$1000$ & $1$ & $1$ & $2.3$ & $1.6$ & $1.9$ & $2.5$ & $1.9$ & $2.2$ & $4.6$ & $2.0$ & $1.7$ & $1.9$\\
& $10000$ & $1$ & $1$ & $2.9$ & $1.4$ & $1.9$ & $2.5$ & $3.1$ & $3.1$ & $4.2$ & $4.0$ & $3.0$ & $1.6$\\
& $20000$ & $1$ & $1$ & $2.4$ & $2.8$ & $1.7$ & $5.2$ & $3.9$ & $3.1$ & $3.5$ & $4.2$ & $2.5$ & $1.9$\\
& $50000$ & $1$ & $1$ & $4.0$ & $2.2$ & $3.1$ & $4.9$ & $3.0$ & $4.2$ & $4.7$ & $4.3$ & $2.7$ & $3.5$\\
                    \hline\hline
\multirow{ 4}{*}{ARG} &
$1000$ & $4.5$ & $6.4$ & $17$ & $6.8$ & $12$ & $9.2$ & $7.5$ & $7.6$ & $5.9$ & $6.4$ & $3.0$ & $4.7$\\
& $10000$ & $5.7$ & $6.0$ & $6.4$ & $10$ & $8.4$ & $10$ & $7.5$ & $7.6$ & $6.6$ & $8.3$ & $3.4$ & $4.8$\\
& $20000$ & $6.1$ & $6.5$ & $4.6$ & $12$ & $9.4$ & $11$ & $7.8$ & $8.8$ & $6.5$ & $11$ & $3.5$ & $3.9$\\
& $50000$ & $12$ & $11$ & $8.5$ & $12$ & $9.6$ & $15$ & $7.7$ & $22$ & $6.8$ & $20$ & $3.4$ & $3.5$\\
                    \hline\hline
\multirow{ 4}{*}{ARBD} &
$1000$ & $1$ & $1$ & $21$ & $18$ & $24$ & $26$ & $19$ & $20$ & $15$ & $15$ & $6.8$ & $7.2$\\
& $10000$ & $1$ & $1$ & $11$ & $25$ & $18$ & $21$ & $18$ & $21$ & $20$ & $17$ & $8.8$ & $8.5$\\
& $20000$ & $1$ & $1$ & $12$ & $28$ & $21$ & $23$ & $20$ & $22$ & $33$ & $28$ & $11$ & $11$\\
& $50000$ & $1$ & $1$ & $13$ & $30$ & $22$ & $22$ & $44$ & $54$ & $48$ & $47$ & $12$ & $10$\\
                    \hline\hline	
        \end{tabular}
        }
    \scriptsize
    \caption{\footnotesize{Results for IGa: computation times. Here we take $(\eta_1, \eta_2)=(1.1, 2)$ and $(p, \beta)=(1, 0.9)$. For \emph{ARGS} the values are in seconds, otherwise they are the multiplicative factors with respect to the \emph{ARGS} method}}\label{tab:iga:times}
\end{table}

\begin{table}[ht!]
    \centering\scriptsize
        \resizebox{\textwidth}{!}{
        \begin{tabular}{*{9}{|c|ccccc|ccccc}}
				             \hline
				 & \multicolumn{5}{c|}{$m_1$} & \multicolumn{5}{c|}{$m_2$} \\
                    \hline
$\gamma$ & True & ARGS & Inverse  & ARG  & ARBD  
& True & ARGS & Inverse  & ARG  & ARBD \\
										\hline
$1$ & $0.070$ & $1.0\%$ & $1.0\%$ & $-0.5\%$ & $1.0\%$
& $0.055$ & $1.3\%$ & $1.3\%$ & $1.9\%$ & $1.3\%$ \\
$2$ & $0.694$ & $0.5\%$ & $0.1\%$ & $-0.3\%$ & $-2.8\%$ 
& $0.946$ & $0.8\%$ & $-0.5\%$ & $-0.5\%$ & $-5.1\%$ \\
$3$ & $1.261$ & $0.3\%$ & $0.0\%$ & $0.2\%$ & $0.0\%$ 
& $2.395$ & $0.7\%$ & $-0.3\%$ & $0.1\%$ & $-0.2\%$\\ 
$4$ & $1.804$ & $0.1\%$ & $0.2\%$ & $0.0\%$ & $0.8\%$ 
& $4.372$ & $0.4\%$ & $0.4\%$ & $0.3\%$ & $1.6\%$ \\
$5$ & $2.336$ & $0.0\%$ & $-0.2\%$ & $0.2\%$ & $1.1\%$ 
& $6.866$ & $0.0\%$ & $-0.5\%$ & $0.5\%$ & $2.3\%$ \\
$10$ & $4.916$ & $0.2\%$ & $0.6\%$ & $-0.5\%$ & $-0.5\%$ 
& $26.96$ & $0.3\%$ & $1.5\%$ & $-0.6\%$ & $-3.7\%$\\
                    \hline
				 & \multicolumn{5}{c|}{$m_3$} & \multicolumn{5}{c|}{$m_4$} \\
                    \hline
$\gamma$ & True & ARGS & Inverse  & ARG  & ARBD
& True & ARGS & Inverse  & ARG  & ARBD \\
										\hline									
$1$ & $0.088$ & $2.9\%$ & $2.9\%$ & $4.0\%$ & $2.9\%$ 
& $0.212$ & $-3.5\%$ & $-3.5\%$ & $-2.9\%$ & $-3.5\%$ \\
$2$ & $1.962$ & $0.8\%$ & $-1.5\%$ & $0.1\%$ & $-7.4\%$ 
& $5.546$ & $-0.3\%$ & $-2.5\%$ & $1.6\%$ & $3.4\%$ \\
$3$ & $6.152$ & $1.0\%$ & $-0.9\%$ & $-0.3\%$ & $0.1\%$  
& $20.13$ & $1.1\%$ & $-1.5\%$ & $-1.2\%$ & $1.3\%$ \\
$4$ & $13.40$ & $1.0\%$ & $0.5\%$ & $0.9\%$ & $2.5\%$
& $50.06$ & $1.9\%$ & $0.8\%$ & $1.9\%$ & $3.0\%$ \\
$5$ & $24.47$ & $-0.1\%$ & $-0.8\%$ & $0.8\%$ & $3.6\%$
& $103.0$ & $-0.3\%$ & $-1.0\%$ & $1.1\%$ & $5.3\%$ \\
$10$ & $163.3$ & $0.4\%$ & $1.7\%$ & $-1.8\%$ & $3.4\%$  
& $1085$ & $0.4\%$ & $5.1\%$ & $-4.0\%$ & $6.1\%$ \\
  \hline				
        \end{tabular}
        }
    \scriptsize
  \caption{\footnotesize{Results for IGa: moment comparison. Here we take $(p, \beta, \eta)=(1, 0.9, 2)$ and evaluate empirical moments based on $R=5\times 10^4$ simulated values. Column \emph{True} gives the true values of the moments, while the other columns give the err \%.}}\label{tab:iga:moments}
\end{table}

\subsection{Results for IBGM}\label{sec:num:exp:new:dist}

In this section we compare the performance of three methods for simulating from an IBGM distribution, which are presented in Section\myref{subsec:new:dist}. The first method, denoted \emph{Inverse} combines Algorithm IBGM1 with Algorithm M$^\sharp$1. In this case, we always chose our initial guess for the numerical inversion to be the midpoint, $(1+\eta)/2$. The second, denoted \emph{ARGS} combines Algorithm IBGM1 with Algorithm M$^\sharp$-CS  when $\gamma=1$ and with Algorithm M$^\sharp$2 when $\gamma\ge2$. The third, denoted \emph{GGSM}, uses Algorithm IBGM2. When implementing Algorithm M$^\sharp$2, we use Algorithm M$^\sharp$-CS in the first step to generate an observation from $m^\sharp_{\beta,1,p,\eta}$. In all cases, when we use Algorithm M$^\sharp$-CS we take $L=2000$ equally spaced intervals. This algorithm was introduced in\mycite{cs20_3} and, while it is an approximate algorithm, it works very well and is almost exact. 

Table\myref{tab:new:dist:times} presents the computation times of the different methods for several choices of the parameters. Here, we take \emph{Inverse} as the baseline. For it all values are given in seconds, while the values for the other methods are given as multiplicative factors with respect to it. When $\gamma=1$, \emph{ARGS}, which uses Algorithm M$^\sharp$-CS, performed the fastest. However, when $\gamma\ge2$ there was a dichotomy. In this case \emph{GGSM} is always faster than \emph{ARGS} for $\eta=1.1$ and slower than \emph{ARGS} for $\eta=2$. This is likely related to the asymptotic results (as $\eta\downarrow1$) for the probability of acceptance in Algorithms IBGM2 and $M^\sharp2$, which are given in Section\myref{subsec:new:dist}. Method \emph{Inverse} tends to work better for larger values for $\gamma$. To summarize, when $\gamma=1$ it is better to use \emph{ARGS}, when $\gamma$ is large it is better to use \emph{Inverse}, and when $\gamma\ge2$ is not too big, the situation depends on the value of $\eta$. In the context of the simulation of an OUTS process on a finite grid, a larger value of $\eta$ corresponds to a grid of time points with larger time-steps, whereas $\eta$ approaches $1$ as the grid gets finer. Thus, in this case, the selection of the fastest approach depending on the granularity of the grid.

Table\myref{tab:new:distribution:moments} gives the comparison between the true and the empirical moments based on $5\times 10^4$ simulated values. We see that all of the methods seem to be simulating from the correct distributions. This is especially important to note in the case of \emph{ARGS} as this method is only approximate.  We see that it works well and that the err \% is no worse than it is for the other methods.

\begin{table}[ht!]
    \centering\footnotesize
        \resizebox{\textwidth}{!}{
        \begin{tabular}{|c|c|cc|cc|cc|cc|cc|cc|}
				             \hline
				& & \multicolumn{2}{c|}{$\gamma=1$} & \multicolumn{2}{c|}{$\gamma=2$} & \multicolumn{2}{c|}{$\gamma=3$} & \multicolumn{2}{c|}{$\gamma=4$} & \multicolumn{2}{c|}{$\gamma=5$} & \multicolumn{2}{c|}{$\gamma=10$}\\
										\hline
Method & $R$ & $\eta_1$ & $\eta_2$ & $\eta_1$ & $\eta_2$ & $\eta_1$ & $\eta_2$ & $\eta_1$ & $\eta_2$ & $\eta_1$ & $\eta_2$ & $\eta_1$ & $\eta_2$\\										 \hline\hline 	
\multirow{ 4}{*}{Inverse (sec)} &
$1000$ & $0.4853$ & $1.0034$ & $0.5923$ & $0.5855$ & $1.2915$ & $0.6171$ & $0.8489$ & $1.2302$ & $0.0385$ & $0.4398$ & $0.0204$ & $0.7247$\\
& $10000$ & $2.2132$ & $2.6426$ & $3.5021$ & $1.5841$ & $8.9314$ & $2.6201$ & $2.0610$ & $3.4214$ & $0.2153$ & $3.6592$ & $0.1602$ & $4.5543$\\
& $20000$ & $3.9827$ & $3.2911$ & $8.8375$ & $5.7192$ & $15.896$ & $6.4364$ & $3.6689$ & $8.9187$ & $0.4276$ & $12.247$ & $0.3198$ & $12.514$\\
& $50000$ & $12.636$ & $9.1716$ & $27.139$ & $17.987$ & $45.205$ & $23.394$ & $8.253$ & $36.293$ & $1.0157$ & $48.394$ & $0.792$ & $21.124$\\

										\hline\hline
\multirow{ 4}{*}{ARGS} &
$1000$ & $0.0082$ & $0.0030$ & $1.6$ & $1.5$ & $0.5$ & $1.2$ & $0.8$ & $0.7$ & $18$ & $2.1$ & $386$ & $1.8$\\
& $10000$ & $0.0050$ & $0.0034$ & $1.8$ & $4.6$ & $0.7$ & $3.6$ & $3.2$ & $2.5$ & $33$ & $2.5$ & $478$ & $2.9$\\
& $20000$ & $0.0048$ & $0.0061$ & $1.4$ & $3.2$ & $0.8$ & $2.9$ & $3.7$ & $1.9$ & $33$ & $1.5$ & $455$ & $2.1$\\
& $50000$ & $0.0050$ & $0.0030$ & $1.1$ & $2.5$ & $0.7$ & $2.1$ & $4.1$ & $1.4$ & $35$ & $1.0$ & $449$ & $3.1$\\
                    \hline\hline
\multirow{ 4}{*}{GGSM} &
$1000$ & $0.1665$ & $0.0885$ & $0.2$ & $3.0$ & $0.1$ & $8.4$ & $0.2$ & $1.5$ & $6.8$ & $6.1$ & $37$ & $219$\\
& $10000$ & $0.3880$ & $0.2597$ & $0.3$ & $18$ & $0.2$ & $17$ & $1.0$ & $15$ & $12$ & $7.0$ & $46$ & $331$\\
& $20000$ & $0.3176$ & $0.3285$ & $0.2$ & $9.8$ & $0.2$ & $14$ & $1.1$ & $7.1$ & $12$ & $3.9$ & $46$ & $241$\\
& $50000$ & $0.1822$ & $0.2909$ & $0.2$ & $7.2$ & $0.2$ & $11$ & $1.2$ & $16$ & $13$ & $13$ & $48$ & $351$\\
							      \hline\hline				
        \end{tabular}
        }
    \scriptsize
    \caption{\footnotesize{Results for IBGM: computation times. Here we take $(\eta_1, \eta_2)=(1.1, 2)$ and $(p, \beta)=(1, 0.9)$. For \emph{Inverse} the values are in seconds, otherwise they are the multiplicative factors with respect to the \emph{Inverse} method}}\label{tab:new:dist:times}
\end{table}

\begin{table}[ht!]
    \centering\footnotesize
        \resizebox{\textwidth}{!}{
        \begin{tabular}{*{9}{|c|cccc|cccc|}}
				             \hline
				 & \multicolumn{4}{c|}{$m_1$} & \multicolumn{4}{c|}{$m_2$} \\
                    \hline
$\gamma$ & True & Inverse  & ARGS  & GGSM  & True & Inverse  & ARG  & ARBD  \\
										\hline
$1$ & $0.0630$ & $0.2\%$ & $0.1\%$ & $0.3\%$ & $0.0450$ & $-0.2\%$ & $0.2\%$ & $1.6\%$\\
$2$ & $0.6566$ & $-0.1\%$ & $0.1\%$ & $0.4\%$ & $0.8393$ & $0.6\%$ & $0.4\%$ & $0.5\%$\\
$3$ & $1.2146$ & $-0.3\%$ & $-1.0\%$ & $-0.1\%$ & $2.2096$ & $0.5\%$ & $-1.8\%$ & $0.1\%$\\
$4$ & $1.7556$ & $0.1\%$ & $-2.3\%$ & $-0.3\%$ & $4.1220$ & $0.2\%$ & $-4.5\%$ & $-0.9\%$\\
$5$ & $2.2865$ & $0.1\%$ & $-3.0\%$ & $0.0\%$ & $6.5618$ & $-0.1\%$ & $-1.7\%$ & $0.0\%$\\
$10$ & $4.8739$ & $0.1\%$ & $-0.7\%$ & $-3.0\%$ & $26.470$ & $0.2\%$ & $-0.9\%$ & $-3.2\%$\\

                    \hline
				 & \multicolumn{4}{c|}{$m_3$} & \multicolumn{4}{c|}{$m_4$} \\
                    \hline
$\gamma$ & True & Inverse  & ARGS  & GGSM  & True & Inverse  & ARGS  & GGSM  \\
										\hline									
$1$ & $0.0630$ & $-2.5\%$ & $-2.5\%$ & $4.6\%$ & $0.1346$ & $4.7\%$ & $6.4\%$ & $-4.9\%$\\
$2$ & $1.6182$ & $-2.1\%$ & $1.3\%$ & $0.1\%$ & $4.2236$ & $-5.3\%$ & $2.8\%$ & $0.1\%$\\
$3$ & $5.4023$ & $0.5\%$ & $-2.0\%$ & $0.4\%$ & $16.724$ & $3.5\%$ & $-0.7\%$ & $0.8\%$\\
$4$ & $12.190$ & $-0.1\%$ & $-5.8\%$ & $-1.7\%$ & $43.721$ & $-1.0\%$ & $-5.1\%$ & $-3.2\%$\\
$5$ & $22.749$ & $-0.3\%$ & $-4.6\%$ & $-0.1\%$ & $92.820$ & $-0.6\%$ & $-2.7\%$ & $-0.3\%$\\
$10$ & $158.67$ & $0.2\%$ & $-0.7\%$ & $-3.0\%$ & $1041.7$ & $0.3\%$ & $1.6\%$ & $-0.7\%$\\
							      \hline				
        \end{tabular}
        }
    \scriptsize
    \caption{\footnotesize{Results for IBGM: moment comparison.  Here we take $(p, \beta, \eta)=(1, 0.9, 2)$ and evaluate empirical moments based on $R=5\times 10^4$ simulated values. Column \emph{True} gives the true values of the moments, while the other columns give the err \%.}}\label{tab:new:distribution:moments}
\end{table}

\subsection{Results for TSOU processes}\label{sub:num:oupts}

Theorem\myref{thrm: main} characterizes the transition laws of TSOU processes with $p>0$ and $\alpha\in[0, 2)$. This can be used to simulate such a process on a finite grid of times. 
In this section we illustrate this approach by performing a series of simulations. We focus on the important class of $p$-RDTS distributions, which correspond to the case where the dimension $d=1$ and the stationary distribution is $\ts(R,b)$, where $R(\rd x)=c\beta^{\alpha}\delta_{1/\beta}(\rd x)$ for some $c,\beta>0$. This means that $R^1(\rd x) = \delta_{1/\beta}(\rd x)$ and hence that each $V_i=1/\beta$ with probability $1$. For simplicity we take $b=0$ and for tractability, we take $\alpha<1$ and $p>1$. In this case $\gamma=1$, we can simulate $X_0$ using the method given in \cite{Grabchak21a}, and we can simulate from the required $\IGa$ distribution using the \emph{Inverse} method, which combines Algorithm IGa1 with Algorithm M0.

Figure\myref{fig:ptsou:trajectories} displays the sample trajectories of TSOU processes with several choices of the parameters. These were simulated using the time-step $t=1/365$ over an equally-spaced grid with $365$ points. It is well-known that $p$-RDTS distributions with $\alpha=0.5$ and $p=1$ reduce to the well-known class of inverse Gaussian distributions. In our simulations we take $\alpha=0.5$ and $p>1$.  Thus, these processes are generalizations of inverse Gaussian OU processes. We note that the transition laws of inverse Gaussian OU processes were studied in \cite{Zhang:Zhang:2008}.

Next, we check the correctness and of our algorithm. We simulate $10^5$ observations from the stationary law with a time step of $t=0.1$. A comparison of the true cumulants and the empirical cumulants is given in Table\myref{tab:pts:ou:30:360} for several choices of the parameters. The values of the true cumulants are evaluated using \eqref{eq: cum TSOU} with the appropriate choice of $R$ and the parameters. We see that for all cumulants the err \% is small, which suggests that the algorithm stemming from Theorem\myref{thrm: main} is simulating from the correct distribution. 

\begin{figure}
\centering
\includegraphics[width=16cm, height=6cm]{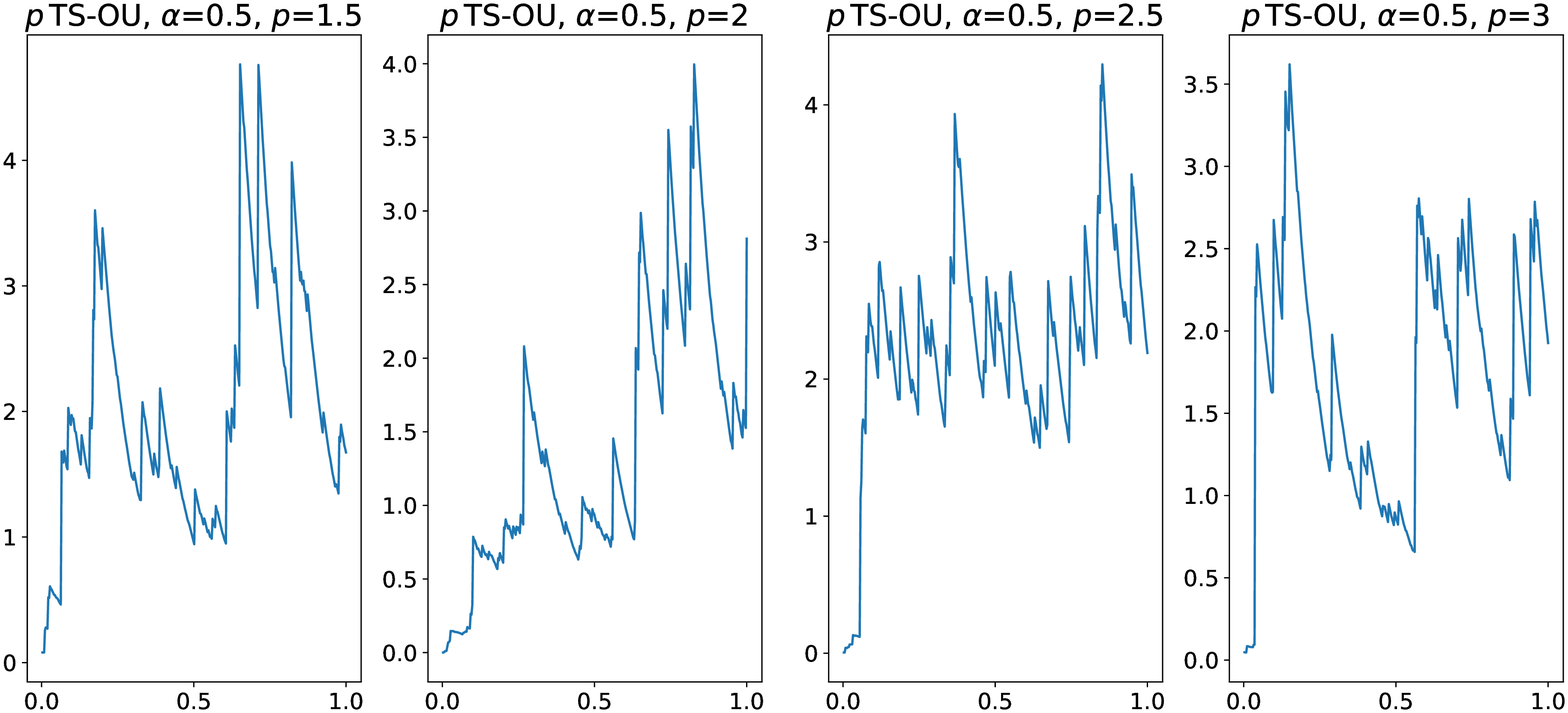}
\caption{Sample trajectories of TSOU processes with parameter $\lambda=10$, initial value $y_0=0$, and time step $t=1/365$. The stationary distribution is $\ts(R,0)$ with $R(\rd x) = c\beta^\alpha\delta_{1/\beta}(\rd x)$, where $c=1$, $\beta=1$, $\alpha=0.5$, and $p\in\{1.5,2,2.5,3\}$.}
\label{fig:ptsou:trajectories}
\end{figure}

\begin{table}[ht!]
    \centering\scriptsize
        \resizebox{\textwidth}{!}{
        \begin{tabular}{*{10}{|c|c||rr|rr|rr|rr}}
                    \hline
& &       \multicolumn{2}{c|}{$c_{X, 1}(0,t)$} & \multicolumn{2}{c|}{$c_{X, 2}(0,t)$} & \multicolumn{2}{c|}{$c_{X, 3}(0,t)$} & \multicolumn{2}{c|}{$c_{X, 4}(0,t)$} \\
                    \hline
                   $p$ &  $\alpha$ & true & err \% & true & err \% & true & err \% & true & err \% \\
                    \hline
\multirow{ 5}{*}{1.5} & 
$0.1$ & $0.628$ & $-0.61\%$ & $0.521$ & $-0.53\%$ & $0.617$ & $-0.34\%$ & $0.936$ & $1.25\%$\\
& $0.3$ & $0.800$ & $-0.03\%$ & $0.541$ & $-0.04\%$ & $0.590$ & $-1.07\%$ & $0.850$ & $-2.93\%$\\
& $0.5$ & $1.129$ & $0.06\%$ & $0.576$ & $-0.47\%$ & $0.572$ & $-2.52\%$ & $0.779$ & $-4.01\%$\\
& $0.7$ & $1.935$ & $-0.04\%$ & $0.632$ & $0.29\%$ & $0.562$ & $0.10\%$ & $0.721$ & $-1.58\%$\\
& $0.9$ & $6.104$ & $-0.71\%$ & $0.720$ & $-0.48\%$ & $0.562$ & $-1.74\%$ & $0.674$ & $-1.14\%$\\

\hline\hline
\multirow{ 5}{*}{2} & 
$0.1$ & $0.622$ & $-0.69\%$ & $0.446$ & $-0.76\%$ & $0.421$ & $-0.56\%$ & $0.481$ & $-0.01\%$\\
& $0.3$ & $0.805$ & $0.53\%$ & $0.481$ & $0.47\%$ & $0.423$ & $1.06\%$ & $0.464$ & $4.53\%$\\
& $0.5$ & $1.146$ & $0.31\%$ & $0.530$ & $0.28\%$ & $0.431$ & $-1.11\%$ & $0.451$ & $-4.04\%$\\
& $0.7$ & $1.966$ & $-0.04\%$ & $0.599$ & $0.15\%$ & $0.443$ & $-0.75\%$ & $0.442$ & $-4.39\%$\\
& $0.9$ & $6.154$ & $0.63\%$ & $0.699$ & $-0.78\%$ & $0.463$ & $-3.14\%$ & $0.436$ & $-4.21\%$\\

\hline\hline
\multirow{ 5}{*}{2.5} & 
$0.1$ & $0.625$ & $0.69\%$ & $0.419$ & $0.36\%$ & $0.353$ & $-0.65\%$ & $0.349$ & $-1.95\%$\\
& $0.3$ & $0.813$ & $0.35\%$ & $0.460$ & $0.56\%$ & $0.365$ & $0.32\%$ & $0.348$ & $-4.66\%$\\
& $0.5$ & $1.161$ & $-0.03\%$ & $0.515$ & $-0.98\%$ & $0.380$ & $-2.13\%$ & $0.348$ & $-3.77\%$\\
& $0.7$ & $1.988$ & $0.04\%$ & $0.590$ & $0.03\%$ & $0.400$ & $0.50\%$ & $0.351$ & $2.29\%$\\
& $0.9$ & $6.185$ & $-0.34\%$ & $0.696$ & $0.77\%$ & $0.427$ & $-2.16\%$ & $0.357$ & $3.67\%$\\

\hline\hline
\multirow{ 5}{*}{3} & 
$0.1$ & $0.630$ & $-0.36\%$ & $0.409$ & $0.24\%$ & $0.323$ & $0.97\%$ & $0.294$ & $-1.64\%$\\
& $0.3$ & $0.822$ & $0.25\%$ & $0.453$ & $0.03\%$ & $0.339$ & $-0.54\%$ & $0.298$ & $-3.60\%$\\
& $0.5$ & $1.173$ & $-0.22\%$ & $0.511$ & $-0.49\%$ & $0.358$ & $-1.28\%$ & $0.304$ & $-2.44\%$\\
& $0.7$ & $2.005$ & $-0.18\%$ & $0.589$ & $-0.85\%$ & $0.381$ & $-3.16\%$ & $0.311$ & $-3.13\%$\\
& $0.9$ & $6.206$ & $-0.62\%$ & $0.699$ & $-2.17\%$ & $0.411$ & $-2.23\%$ & $0.321$ & $-4.23\%$\\
                    \hline
        \end{tabular}
        }
    \scriptsize
    \caption{\footnotesize{Comparison of the first four true cumulants with their estimated values obtained from $10^5$ simulations from the transition law of a TSOU process with $\lambda=10$, initial value $y_0=0$, and time step $t=0.1$. The stationary distribution is $\ts(R,0)$ with $R(\rd x) = c\beta^\alpha\delta_{1/\beta}(\rd x)$, where $c=1$, $\beta=1$, and with several choices for $\alpha$ and $p$.
}}\label{tab:pts:ou:30:360}
\end{table}

\subsection{OUTS processes}\label{sub:num:cts}

We now turn to the simulation of OUTS process on a finite grid. We again focus on the case of $p$-RDTS distributions and for simplicity we assume that the shift $b=0$. Here, we are assuming that the dimension $d=1$ and that the BDLP distribution is $\ts(\lambda R,0)$, where $\lambda>0$ is the parameter of the OUTS process and $R(\rd x) = c\beta^\alpha\delta_{1/\beta}(\rd x)$ for some $c,\beta>0$. 

We begin with the case $\alpha\ge0$, for which the transition law is characterized in Theorem\myref{thrm: main bdlp}. For tractability, we again focus on the case $\alpha\in[0,1)$ and $p>1$. Here $\gamma=1$ and we can simulate the term $X_0$ using the approach given  in\mycite{Grabchak21a}. To simulate from the $\IBGM$ distribution we use the \emph{ARGS} method discussed in Section\myref{sec:num:exp:new:dist}. We again take $L=2000$ equally spaced intervals, which leads to an efficient approximate simulation method. Figure\myref{fig:ptsou:neg:trajectories} shows sample trajectories of OUTS processes for several choices of the parameters. In all cases we take $y_0=0$ as the initial value, a time step of $t=1/365$, $\alpha=0.5$, and $p>1$. With this choice for $\alpha$, we can think of the processes as extensions of OU process with inverse Gaussian BDLP distributions. Next, to check the correctness of the algorithm, we simulate $10^5$ observations from the transition law with a time step of $t=0.1$ and several values for the parameters. We evaluate the empirical cumulants and compare them to the true cumulants in Table\myref{tab:ou:pts:30:360}. We can see that err \% is small. Here the true cumulants are evaluated using \eqref{eq: cum OUTS}. 

When $\alpha<0$ the transition law is given in Theorem\myref{thrm: main bdlp alpha neg}. The formula is very simple and essentially boils down to simulating from the $\DGGa$ distribution, which is easily done using the approach described in Section \ref{sec: DGGa}. For several choices of the parameters, plots of the sample trajectories of these processes using a time step of $t=1/365$ are given in Table\myref{tab:ou:pts:negative:30:360} and a comparison of the empirical and true cumulants again using $10^5$ observations from the transition law is given in Figure\myref{fig:ptsou:neg:trajectories}. We can again see that err \% is small.

\begin{table}[ht!]
    \centering\scriptsize
        \resizebox{\textwidth}{!}{
        \begin{tabular}{*{10}{|c|c||rr|rr|rr|rr}}
                    \hline
& &       \multicolumn{2}{c|}{$c_{X, 1}(0,t)$} & \multicolumn{2}{c|}{$c_{X, 2}(0,t)$} & \multicolumn{2}{c|}{$c_{X, 3}(0,t)$} & \multicolumn{2}{c|}{$c_{X, 4}(0,t)$} \\
                    \hline
                   $p$ &  $\alpha$ & true  & err \% & true  & err \% & true  & err \% & true  & err \% \\
                    \hline
\multirow{ 5}{*}{1.5} &
$0.1$ & $0.063$ & $1.0\%$ & $0.026$ & $0.8\%$ & $0.021$ & $-1.3\%$ & $0.023$ & $-3.6\%$\\
& $0.3$ & $0.080$ & $0.1\%$ & $0.027$ & $1.1\%$ & $0.020$ & $2.6\%$ & $0.021$ & $3.8\%$\\
& $0.5$ & $0.113$ & $0.8\%$ & $0.029$ & $2.0\%$ & $0.019$ & $3.7\%$ & $0.020$ & $4.1\%$\\
& $0.7$ & $0.194$ & $0.1\%$ & $0.032$ & $0.5\%$ & $0.019$ & $0.6\%$ & $0.018$ & $0.3\%$\\
& $0.9$ & $0.610$ & $-0.3\%$ & $0.036$ & $-2.0\%$ & $0.019$ & $-4.0\%$ & $0.017$ & $-4.6\%$\\

\hline\hline
\multirow{ 5}{*}{2} & 
$0.1$ & $0.062$ & $1.0\%$ & $0.022$ & $2.2\%$ & $0.014$ & $3.7\%$ & $0.012$ & $3.8\%$\\
& $0.3$ & $0.081$ & $1.3\%$ & $0.024$ & $2.1\%$ & $0.014$ & $1.2\%$ & $0.012$ & $-3.8\%$\\
& $0.5$ & $0.115$ & $0.4\%$ & $0.027$ & $0.2\%$ & $0.014$ & $-0.6\%$ & $0.011$ & $-0.7\%$\\
& $0.7$ & $0.197$ & $-0.6\%$ & $0.030$ & $-1.2\%$ & $0.015$ & $0.3\%$ & $0.011$ & $4.2\%$\\
& $0.9$ & $0.615$ & $-0.2\%$ & $0.035$ & $-1.3\%$ & $0.015$ & $-1.2\%$ & $0.011$ & $0.8\%$\\

\hline\hline
\multirow{ 5}{*}{2.5} &
$0.1$ & $0.063$ & $1.4\%$ & $0.021$ & $1.7\%$ & $0.012$ & $1.5\%$ & $0.009$ & $0.3\%$\\
& $0.3$ & $0.081$ & $0.8\%$ & $0.023$ & $1.4\%$ & $0.012$ & $-0.4\%$ & $0.009$ & $-4.0\%$\\
& $0.5$ & $0.116$ & $0.3\%$ & $0.026$ & $0.9\%$ & $0.013$ & $1.1\%$ & $0.009$ & $1.6\%$\\
& $0.7$ & $0.199$ & $0.4\%$ & $0.030$ & $2.6\%$ & $0.013$ & $3.8\%$ & $0.009$ & $4.5\%$\\
& $0.9$ & $0.619$ & $-0.1\%$ & $0.035$ & $-0.2\%$ & $0.014$ & $-1.2\%$ & $0.009$ & $-2.2\%$\\

\hline\hline
\multirow{ 5}{*}{3} & 
$0.1$ & $0.063$ & $-0.7\%$ & $0.020$ & $-1.6\%$ & $0.011$ & $-2.6\%$ & $0.007$ & $-3.0\%$\\
& $0.3$ & $0.082$ & $-0.5\%$ & $0.023$ & $-1.5\%$ & $0.011$ & $-2.6\%$ & $0.007$ & $-5.3\%$\\
& $0.5$ & $0.117$ & $0.5\%$ & $0.026$ & $0.4\%$ & $0.012$ & $0.1\%$ & $0.008$ & $0.3\%$\\
& $0.7$ & $0.201$ & $-0.1\%$ & $0.030$ & $0.0\%$ & $0.013$ & $0.7\%$ & $0.008$ & $2.2\%$\\
& $0.9$ & $0.621$ & $0.0\%$ & $0.035$ & $0.8\%$ & $0.014$ & $0.9\%$ & $0.008$ & $0.3\%$\\

\hline
        \end{tabular}
        }
    \scriptsize    \caption{\footnotesize{Comparison of the first four true cumulants with their estimated values obtained from $10^5$ simulations from the transition law of an OUTS process with $\lambda=10$, initial value $y_0=0$, and time step $t=0.1$. The BDLP distribution is $\ts(\lambda R,0)$ with $R(\rd x) = c\beta^\alpha\delta_{1/\beta}(\rd x)$, where $c=0.1$, $\beta=1$, and with several choices for $\alpha>0$ and $p>1$.}}\label{tab:ou:pts:30:360}
\end{table}

\begin{figure}
\centering
\includegraphics[width=16cm, height=6cm]{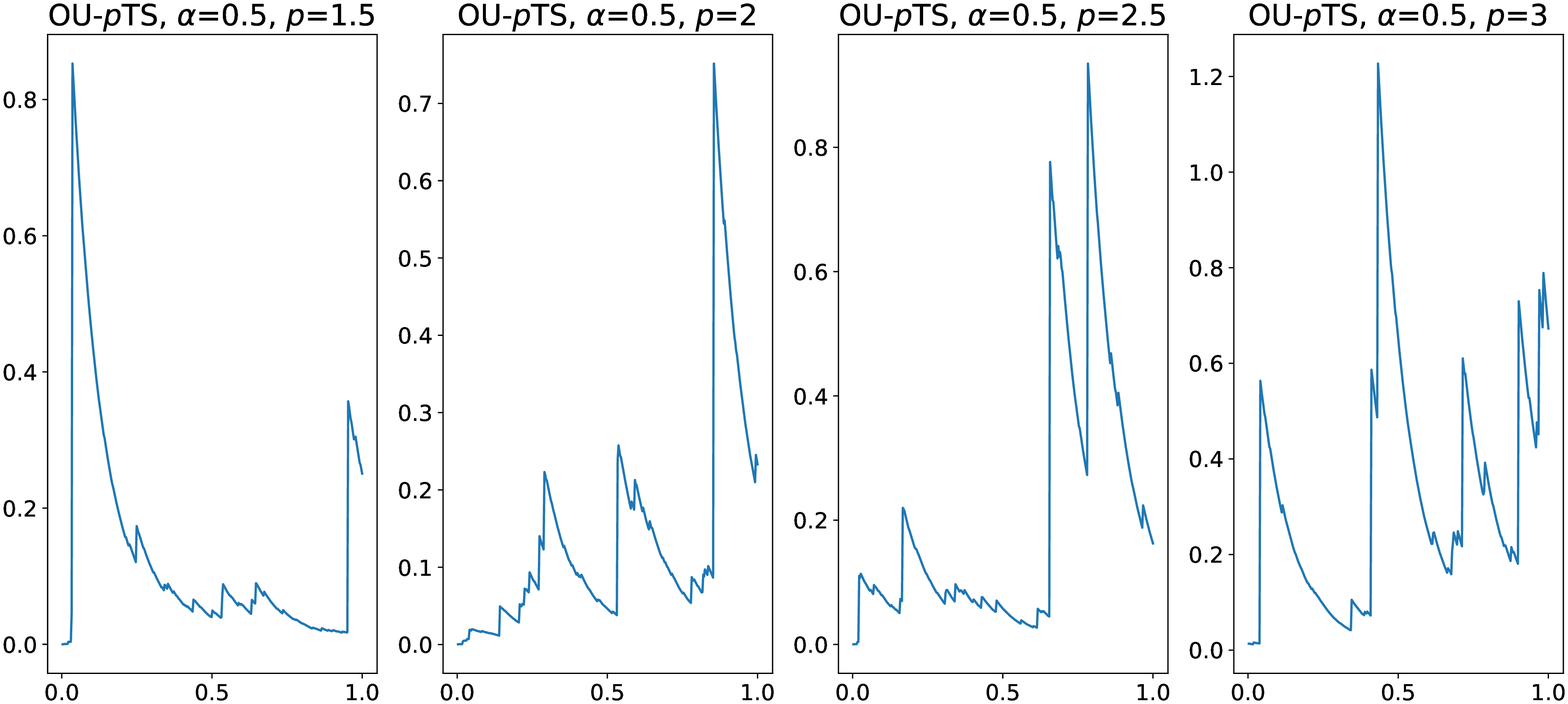}
\caption{Sample trajectories of OUTS processes with parameter $\lambda=10$, initial value $y_0=0$, and time step $t=1/365$. The BDLP distribution is $\ts(\lambda R,0)$ with $R(\rd x) = c\beta^\alpha\delta_{1/\beta}(\rd x)$, where $c=0.1$, $\beta=1$, $\alpha=0.5$, and $p\in\{1.5,2,2.5,3\}$.}
\label{fig:oupts:trajectories}
\end{figure}

\begin{table}[ht!]
    \centering\scriptsize
        \resizebox{\textwidth}{!}{
        \begin{tabular}{*{10}{|c|c||rr|rr|rr|rr}}
                    \hline
& &       \multicolumn{2}{c|}{$c_{X, 1}(0,t)$} & \multicolumn{2}{c|}{$c_{X, 2}(0,t)$} & \multicolumn{2}{c|}{$c_{X, 3}(0,t)$} & \multicolumn{2}{c|}{$c_{X, 4}(0,t)$} \\
                    \hline
                   $p$ &  $\alpha$ & true & err \% & true &  err \% & true& err \% & true &  err \% \\
                    \hline
\multirow{ 5}{*}{1.5} & 
$-0.1$ & $0.053$ & $1.17\%$ & $0.026$ & $1.73\%$ & $0.022$ & $1.89\%$ & $0.026$ & $1.32\%$\\
& $-0.3$ & $0.046$ & $-0.16\%$ & $0.026$ & $0.34\%$ & $0.023$ & $0.00\%$ & $0.029$ & $-3.25\%$\\
& $-0.5$ & $0.042$ & $1.28\%$ & $0.026$ & $1.41\%$ & $0.025$ & $2.43\%$ & $0.033$ & $5.27\%$\\
& $-0.7$ & $0.040$ & $-2.02\%$ & $0.027$ & $-1.41\%$ & $0.027$ & $-0.46\%$ & $0.037$ & $0.70\%$\\
& $-0.9$ & $0.038$ & $-0.13\%$ & $0.028$ & $-0.43\%$ & $0.030$ & $-0.16\%$ & $0.042$ & $0.87\%$\\
\hline\hline
\multirow{ 5}{*}{2} & 
$-0.1$ & $0.051$ & $0.97\%$ & $0.021$ & $2.48\%$ & $0.014$ & $5.99\%$ & $0.013$ & $6.12\%$\\
& $-0.3$ & $0.044$ & $0.01\%$ & $0.020$ & $0.26\%$ & $0.014$ & $2.02\%$ & $0.013$ & $6.27\%$\\
& $-0.5$ & $0.039$ & $-0.09\%$ & $0.020$ & $0.70\%$ & $0.015$ & $2.44\%$ & $0.014$ & $4.09\%$\\
& $-0.7$ & $0.035$ & $0.78\%$ & $0.019$ & $0.55\%$ & $0.015$ & $0.81\%$ & $0.015$ & $2.08\%$\\
& $-0.9$ & $0.033$ & $0.67\%$ & $0.019$ & $1.36\%$ & $0.016$ & $0.74\%$ & $0.016$ & $-1.95\%$\\
\hline\hline
\multirow{ 5}{*}{2.5} & 
$-0.1$ & $0.051$ & $-0.38\%$ & $0.019$ & $-0.27\%$ & $0.012$ & $1.50\%$ & $0.009$ & $6.01\%$\\
& $-0.3$ & $0.043$ & $1.55\%$ & $0.018$ & $4.21\%$ & $0.011$ & $7.91\%$ & $0.009$ & $6.40\%$\\
& $-0.5$ & $0.038$ & $-1.85\%$ & $0.017$ & $-3.46\%$ & $0.011$ & $-5.58\%$ & $0.009$ & $-4.02\%$\\
& $-0.7$ & $0.034$ & $-0.67\%$ & $0.017$ & $-0.71\%$ & $0.011$ & $-1.32\%$ & $0.009$ & $-2.60\%$\\
& $-0.9$ & $0.031$ & $-0.54\%$ & $0.016$ & $1.02\%$ & $0.011$ & $3.21\%$ & $0.010$ & $6.10\%$\\
\hline\hline
\multirow{ 5}{*}{3} & 
$-0.1$ & $0.051$ & $-0.09\%$ & $0.019$ & $0.62\%$ & $0.010$ & $1.74\%$ & $0.007$ & $4.35\%$\\
& $-0.3$ & $0.043$ & $0.13\%$ & $0.017$ & $0.56\%$ & $0.010$ & $1.97\%$ & $0.007$ & $5.30\%$\\
& $-0.5$ & $0.037$ & $0.10\%$ & $0.016$ & $-1.69\%$ & $0.010$ & $-4.80\%$ & $0.007$ & $-4.52\%$\\
& $-0.7$ & $0.033$ & $1.02\%$ & $0.015$ & $1.22\%$ & $0.010$ & $2.16\%$ & $0.007$ & $4.73\%$\\
& $-0.9$ & $0.030$ & $-0.19\%$ & $0.015$ & $-1.00\%$ & $0.009$ & $-2.36\%$ & $0.007$ & $-3.78\%$\\
										
\hline
        \end{tabular}
        }
    \scriptsize \caption{\footnotesize{Comparison of the first four true cumulants with their estimated values obtained from $10^5$ simulations from the transition law of an OUTS process with $\lambda=10$, initial value $y_0=0$, and time step $t=0.1$. The BDLP distribution is $\ts(\lambda R,0)$ with $R(\rd x) = c\beta^\alpha\delta_{1/\beta}(\rd x)$, where $c=0.1$, $\beta=1$, and with several choices for $\alpha<0$ and $p>1$.
 }}\label{tab:ou:pts:negative:30:360}
\end{table}

Comparison of the first four true cumulants with their estimated values obtained from $10^5$ simulations from the transition law of an OUTS process with $\lambda=10$, initial value $y_0=0$, and time step $t=0.1$. The stationary distribution is $\ts(R,0)$ with $R(\rd x) = c\beta^\alpha\delta_{1/\beta}(\rd x)$, where $c=1$, $\beta=1$, and several choices for $\alpha$ and $p$.

\begin{figure}
\centering
\includegraphics[width=16cm, height=6cm]{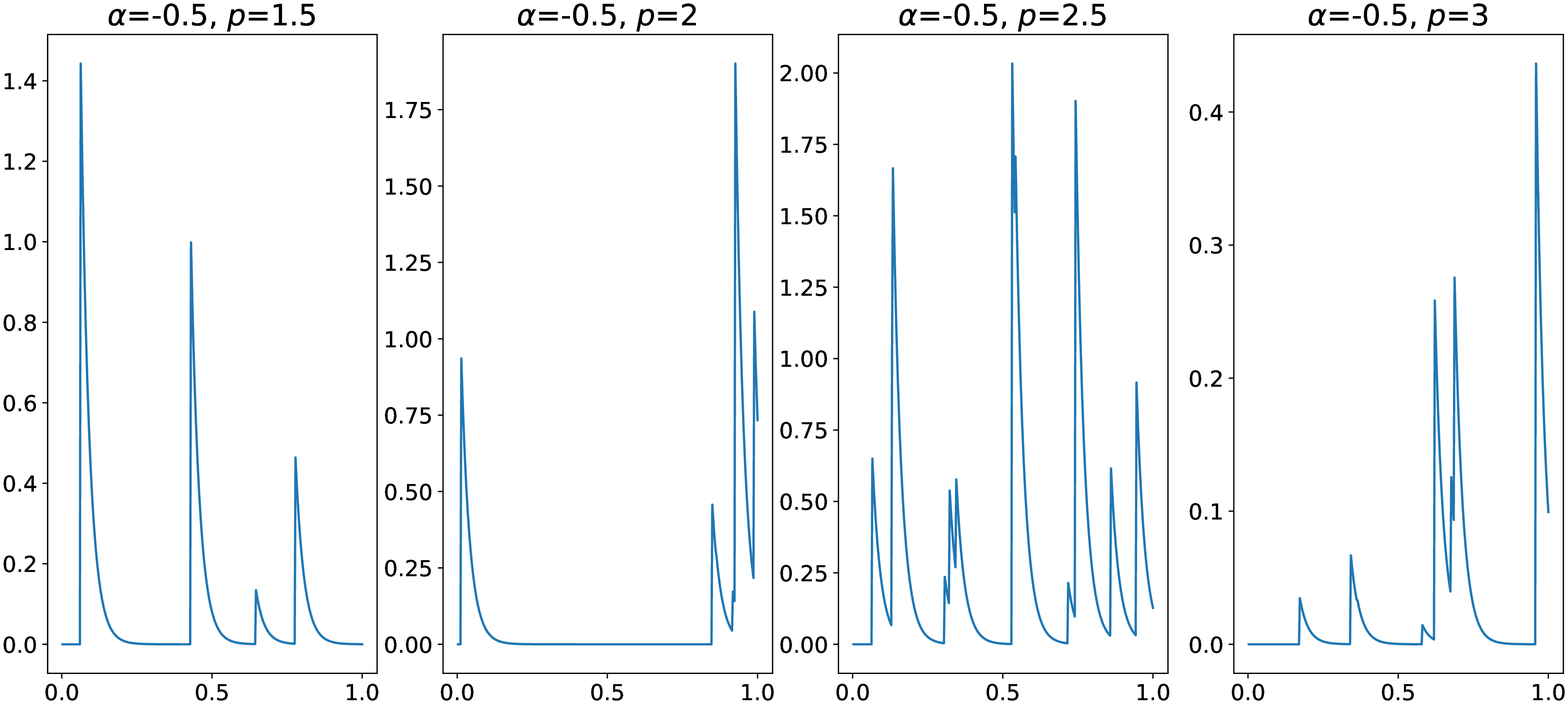}
\caption{Sample trajectories of OUTS processes with parameter $\lambda=10$, initial value $y_0=0$, and time step $t=1/365$. The BDLP distribution is $\ts(\lambda R,0)$ with $R(\rd x) = c\beta^\alpha\delta_{1/\beta}(\rd x)$, where $c=0.1$, $\beta=1$, $\alpha=-0.5$, and $p\in\{1.5,2,2.5,3\}$.}
\label{fig:ptsou:neg:trajectories}
\end{figure}%

\section{Proofs}\label{sec: proofs}

In this section we give the proofs.

\begin{proof}[Proof of Lemma \ref{lemma: IGa is GGSM}]
Note that
\begin{eqnarray*}
f_{\beta,\gamma,p,\eta}(u) &=& \frac{u^{-1-\beta} }{K_{\beta,\gamma,p,\eta}\Gamma(\gamma)}\int_0^{u^p(\eta-1)} x^{\gamma-1}e^{-x-u^p}\rd x\\
&=& \frac{u^{p\gamma-\beta-1}p }{K_{\beta,\gamma,p,\eta}\Gamma(\gamma)}\int_1^{\eta^{1/p}} (\theta^p-1)^{\gamma-1}e^{-\theta^pu^p}\theta^{p-1}\rd \theta\\
&=& \int_1^{\eta^{1/p}} \left(p \frac{\theta^{p\gamma-\beta}u^{p\gamma-\beta-1} }{\Gamma(\gamma-\beta/p)} e^{-(\theta u)^p}\right)\left(\frac{p}{K^*_{\beta,\gamma,p,\eta}}(\theta^p-1)^{\gamma-1}\theta^{p+\beta-p\gamma-1}\right)\rd \theta,
\end{eqnarray*}
where the second line follows by the change of variables $\theta^p=(xu^{-p}+1)$. 
\end{proof}

\begin{proof}[Proof of Proposition \ref{prop: moments of IBGM}.]
First, let $Y\sim \GGa(p\gamma -\beta, p, \theta^p)$ and note that 
$$
\rE[Y^\xi] =\frac{\Gamma\left(\gamma+(\xi-\beta)/p\right)}{\Gamma\left(\gamma-\beta/p\right)}\theta^{-\xi}.
$$
From here, by a conditioning argument, we have 
\begin{align}
\rE[W^\xi] &= \frac{\Gamma\left(\gamma+\xi-\beta)/p\right)}{\Gamma\left(\gamma-\beta/p\right)} \int_1^\eta \theta^{-\xi} m^\sharp_{\beta,\gamma,p,\eta}(\theta)\rd \theta\notag\\
&= \frac{\Gamma\left(\gamma+(\xi-\beta)/p\right)}{\Gamma\left(\gamma-\beta/p\right) C^*_{\beta,\gamma,p,\eta}}
\sum_{k=0}^{\gamma-1} {\gamma-1\choose k} \frac{ (-1)^{k}}{pk-\beta} \int_1^\eta\theta^{-\xi-1}\left(1-\theta^{\beta-pk}\right) \rd\theta,  \label{eq:moments:new:dist}
\end{align}
where
$$
\int_1^\eta\theta^{-\xi-1}\left(1-\theta^{\beta-pk}\right) \rd\theta=\frac{1-\eta^{-\xi}}{\xi} + \frac{1-\eta^{\beta-pk-\xi} }{\beta-pk-\xi}.
$$
Next, using the fact that 
$$
\int_1^\eta \theta^{-\xi-1}\ln\theta\rd \theta = \frac{1-\eta^{-\xi}(\xi\ln\eta +1)}{\xi^2},
$$
the result can be proved in a similar way if $\beta=pk$ for some $k\in\{0,1,2,\dots,\gamma-1\}$.
\end{proof}

Theorem \ref{thrm: main bdlp} is an immediately consequence of the following lemma.

\begin{lemma}\label{lemma:second int}
In the context of Theorem \ref{thrm: main bdlp}, $Y$ is a Markov process with temporally homogenous transition function $P_t(y,\rd x)$ 
having characteristic function
$\int_{\mathbb R^d}e^{i\langle x,z\rangle} P_t(y,\rd x) = \exp\left\{C_t(y,z)\right\}$, where
\begin{eqnarray*}
C_t(y,z) &=& ie^{-\lambda t}\langle y,z\rangle +  i   \left(1-e^{-\lambda t}\right) \langle  b,z\rangle - \sum_{n=0}^{\gamma-1} i\langle b_n,z\rangle\\
&&\quad + \int_{\mathbb R^d}\int_0^\infty  \psi_\alpha(e^{-\lambda t}z,xv)e^{-v^p} v^{-1-\alpha} \rd v R^*_0(\rd x)\\
&&\quad + \sum_{n=1}^{\gamma-1} \int_{\mathbb R^d}\int_0^\infty  \psi_{\alpha-np}(e^{-\lambda t}z,xv)  e^{-v^p}v^{np-1-\alpha} \rd v R^*_n(\rd x)\\
&&\quad +  \frac{pC_{\alpha,\gamma,p,e^{\lambda t}} R(\mathbb R^d)}{(\gamma-1)!}  \int_{\mathbb R^d}\int_0^\infty  \psi_0(z,xv)f^\sharp_{\alpha,\gamma,p,e^{\lambda t}}(v) \rd v R^1(\rd x)
\end{eqnarray*}
and
\begin{eqnarray*}
\psi_\alpha(z,x) = e^{i\langle z,x \rangle} - 1 - i\langle z,x \rangle 1_{[\alpha\ge1]}.
\end{eqnarray*}
\end{lemma}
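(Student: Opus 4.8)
The plan is to compute the transition characteristic function directly from the mild-solution form of the OU process and then match the resulting exponent, term by term, against the characteristic exponent of the random variable in \eqref{eq: trans RV: oupts}. First I would use that, given $Y_s=y$, stationarity and independence of increments of $L$ give $Y_{s+t}\eqd e^{-\lambda t}y+\int_0^t e^{-\lambda(t-r)}\rd L_r$, so that, writing $\Psi$ for the characteristic exponent of the BDLP $L_1\sim\ts(\lambda R,\lambda b)$, the transition characteristic function equals $\exp\{ie^{-\lambda t}\langle y,z\rangle+\int_0^t\Psi(e^{-\lambda r}z)\rd r\}$ (this is the standard computation underlying \eqref{eq: cum OUTS}, via Proposition 3.12 of \cite{ContTankov2004} and Lemma 17.1 of \cite{Sato}). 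Substituting the explicit exponent of $\ts(\lambda R,\lambda b)$, the linear part contributes $i(1-e^{-\lambda t})\langle b,z\rangle$, which matches the second term of $C_t(y,z)$, and leaves the L\'evy part
$$
I:=\lambda\int_0^t\int_{\mathbb R^d}\int_0^\infty \psi_\alpha(e^{-\lambda r}z,xv)\,v^{-1-\alpha}e^{-v^p}\,\rd v\,R(\rd x)\,\rd r.
$$

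Next I would reduce $I$ to a single scalar kernel. Using $\psi_\alpha(e^{-\lambda r}z,xv)=\psi_\alpha(z,e^{-\lambda r}xv)$, the substitution $u=e^{-\lambda r}$ and then $w=uv$ collapse the $r$- and $v$-integrations into
$$
I=\int_{\mathbb R^d}\int_0^\infty \psi_\alpha(z,xw)\,w^{-1-\alpha}J(w)\,\rd w\,R(\rd x),\qquad J(w)=\int_1^{\eta}\theta^{-1-\alpha}e^{-(w\theta)^p}\,\rd\theta,
$$
where $\eta=e^{\lambda t}$. The whole problem now lies in decomposing the kernel $J$. I would factor $e^{-(w\theta)^p}=e^{-(w\eta)^p}e^{w^p(\eta^p-\theta^p)}$ and apply the integral form of Taylor's theorem to $e^{w^p(\eta^p-\theta^p)}$, splitting off the polynomial part of degree $\gamma-1$ from the integral remainder. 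The degree-$n$ term ($0\le n\le\gamma-1$) contributes, against $\psi_\alpha(z,xw)w^{-1-\alpha}$, the factor $e^{-(w\eta)^p}w^{np}\tfrac{1}{n!}\int_1^\eta\theta^{-1-\alpha}(\eta^p-\theta^p)^n\rd\theta$; the substitution $\theta=\eta u$ identifies $\tfrac{1}{n!}\int_1^\eta\theta^{-1-\alpha}(\eta^p-\theta^p)^n\rd\theta=\kappa_{\lambda,t,n}\eta^{np-\alpha}$, so after a rescaling these terms are exactly the exponents of $e^{-\lambda t}X_0$ and $e^{-\lambda t}X_n$ with $X_0\sim\ts(R^*_0,0)$ and $X_n\sim\mathrm{TS}^p_{\alpha-np}(R^*_n,0)$. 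Replacing $\psi_\alpha$ by $\psi_{\alpha-np}$ costs a linear term $-i\langle z,xw\rangle 1_{[1\le\alpha<1+np]}$; integrating this out (a one-line gamma-integral) produces precisely the shift $b^*_n$, and the same device applied with $\psi_0$ in place of $\psi_\alpha$ in the remainder produces $b^*_0$.

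The main obstacle is showing that the Taylor remainder term reproduces the $\IBGM$ compound-Poisson part, i.e.\ that
$$
w^{-1-\alpha}e^{-(w\eta)^p}\tfrac{1}{(\gamma-1)!}\int_1^\eta\theta^{-1-\alpha}\!\!\int_0^{w^p(\eta^p-\theta^p)}\!\!\bigl(w^p(\eta^p-\theta^p)-s\bigr)^{\gamma-1}e^{s}\,\rd s\,\rd\theta=\frac{pC_{\alpha,\gamma,p,\eta}}{(\gamma-1)!}f^\sharp_{\alpha,\gamma,p,\eta}(w).
$$
I would prove this by the chain of changes of variables $s=w^p(\eta^p-\phi^p)$ (which turns $e^{-(w\eta)^p}e^{s}$ into $e^{-(w\phi)^p}$), then Fubini to exchange the order of $\theta$ and $\phi$, and finally $u=\theta/\phi$ to generate the inner incomplete-beta integral $\int_{1/\phi}^1(1-u^p)^{\gamma-1}u^{-1-\alpha}\rd u$ appearing in the definition of $f^\sharp_{\alpha,\gamma,p,\eta}$; careful bookkeeping of the powers of $w$ and $\phi$ is what makes the two sides agree. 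Since $f^\sharp_{\alpha,\gamma,p,\eta}$ is the $\IBGM$ density and $R^1=R/R(\mathbb R^d)$ is a probability measure, the resulting term is $\Lambda\bigl(\rE[e^{i\langle z,V_1W_1\rangle}]-1\bigr)$ with $\Lambda=pC_{\alpha,\gamma,p,\eta}R(\mathbb R^d)/(\gamma-1)!$, $V_1\sim R^1$, $W_1\sim\IBGM(\alpha,\gamma,p,e^{\lambda t})$, which is exactly the exponent of a compound Poisson sum $\sum_{n=1}^N V_nW_n$ with $N$ Poisson($\Lambda$). Collecting the matched pieces, together with the shifts $-\sum_{n=0}^{\gamma-1}b^*_n$, yields the stated formula for $C_t(y,z)$, and the temporal homogeneity and Markov property follow from the Markov property of OU processes and the independence of increments of $L$.
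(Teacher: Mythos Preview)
Your proposal is correct and follows essentially the same route as the paper's proof. Both start from the standard OU transition exponent $\int_0^t\Psi(e^{-\lambda r}z)\,\rd r$, collapse the time and radial integrals into the single kernel $\int_1^{e^{\lambda t}}\theta^{-1-\alpha}e^{-(w\theta)^p}\rd\theta$, factor out $e^{-(w\eta)^p}$, and split via a Taylor expansion of order $\gamma-1$ with integral remainder; the paper packages the remainder identity as ``Lemma 1 in \cite{Grabchak21}'' while you invoke Taylor's theorem directly, but the resulting expressions and the subsequent chain of substitutions ($s\mapsto\phi$ with $s=w^p(\eta^p-\phi^p)$, Fubini, then $u=\theta/\phi$) are identical to the paper's $x\mapsto y$ with $y^p=v^{-p}x+u^p$, Fubini, then $s=u/y$.
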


\begin{proof}
Proposition 2.13 in \cite{Rocha-Arteaga:Sato:2019} implies that
\begin{eqnarray*}
C_t(y,z) =  ie^{-\lambda t}\langle y,z\rangle +  i   \left(1-e^{-\lambda t}\right) \langle  b,z\rangle +  \lambda \int_0^t  \int_{\mathbb R^d}\psi_\alpha(e^{-\lambda s}z,x) M(\rd x)\rd s,
\end{eqnarray*}
where $M$ is the L\'evy measue of $\ts(R,b)$. Now using \eqref{eq: Levy measure of TS} and the fact that $\psi_\alpha(az,x)=\psi_\alpha(z,ax)$ for any $a\in\mathbb R$ gives
\begin{eqnarray}
&&\lambda\int_0^t  \int_{\mathbb R^d}\psi_\alpha(e^{-\lambda s}z,x) M(\rd x)\rd s\nonumber \\
&&\qquad = \lambda\int_{\mathbb R^d}\int_0^\infty \int_0^t  \psi_\alpha(z,xue^{-\lambda s})u^{-1-\alpha} e^{-u^p}\rd s \rd uR(\rd x) \nonumber \\
&&\qquad = \int_{\mathbb R^d}\int_0^\infty \int_{ue^{-\lambda t}}^u  \psi_\alpha(z,xv)u^{-1-\alpha} e^{-u^p}v^{-1}\rd v \rd uR(\rd x) \nonumber \\
&&\qquad = \int_{\mathbb R^d}\int_0^\infty  \psi_\alpha(z,xv)v^{-1}\int_{v}^{ve^{\lambda t}}  u^{-1-\alpha} e^{-u^p} \rd u \rd v R(\rd x).\label{eq: for proof OUTS}
\end{eqnarray}

Note that for $v>0$ we have
\begin{eqnarray*}
&&v^{-1}\int_v^{ve^{\lambda t}} u^{-1-\alpha} e^{-u^p}\rd u = \int_1^{e^{\lambda t}} (uv)^{-1-\alpha} e^{-(vu)^p}\rd u\\
&&\quad= e^{-v^pe^{\lambda tp}}\int_1^{e^{\lambda t}} (uv)^{-1-\alpha} \rd u + \int_1^{e^{\lambda t}} \left(e^{-(vu)^p}- e^{-v^pe^{\lambda tp}}\right)(uv)^{-1-\alpha} \rd u\\
&&\quad= \frac{e^{-v^pe^{\lambda tp}}}{\alpha}\left(1-{e^{-\alpha\lambda t}}\right) v^{-1-\alpha}+  e^{-v^pe^{\lambda tp}} \sum_{n=1}^{\gamma-1} v^{np-1-\alpha}\int_1^{e^{\lambda t}}\frac{(e^{\lambda tp}-u^p)^n}{n!} u^{-1-\alpha} \rd u\\
&&\qquad+\int_1^{e^{\lambda t}} \left(e^{-(vu)^p} - e^{-v^pe^{\lambda tp}}\sum_{n=0}^{\gamma-1}\frac{(e^{\lambda tp}-u^p)^n}{n!}v^{np}\right)(uv)^{-1-\alpha} \rd u.
\end{eqnarray*}

Now applying Lemma 1 in \cite{Grabchak21}
\begin{eqnarray*}
&& \int_1^{e^{\lambda t}} \left(e^{-(vu)^p} - e^{-v^pe^{\lambda tp}}\sum_{n=0}^{\gamma-1}\frac{(e^{\lambda tp}-u^p)^n}{n!}v^{np}\right)(uv)^{-1-\alpha} \rd u \\
&&\qquad = \frac{1}{(\gamma-1)!} \int_1^{e^{\lambda t}} \int_0^{v^p(e^{\lambda tp}-u^p)} e^{-(x+v^pu^p)}x^{\gamma-1}\rd x (uv)^{-1-\alpha} \rd u \\
&&\qquad = \frac{p v^{p\gamma-\alpha-1}}{(\gamma-1)!} \int_1^{e^{\lambda t}} \int_u^{e^{\lambda t}} e^{-v^py^p}(y^p-u^p)^{\gamma-1}y^{p-1}\rd y u^{-1-\alpha} \rd u \\
&&\qquad = \frac{p v^{p\gamma-\alpha-1}}{(\gamma-1)!} \int_1^{e^{\lambda t}} y^{p-1} e^{-v^py^p} \int_1^y (y^p-u^p)^{\gamma-1} u^{-1-\alpha} \rd u \rd y\\
&&\qquad = \frac{p v^{p\gamma-\alpha-1}}{(\gamma-1)!} \int_1^{e^{\lambda t}} y^{p\gamma-\alpha-1} e^{-v^py^p} \int_{1/y}^1 (1-s^p)^{\gamma-1} s^{-1-\alpha} \rd s \rd y\\
&&\qquad = \frac{pC_{\alpha,\gamma,p,e^{\lambda t}} }{(\gamma-1)!} f^\sharp_{\alpha,\gamma,p,e^{\lambda t}}(v),
\end{eqnarray*}
where the third line follows by the substitution $y^p=v^{-p}x+u^p$ and the fifth by the substitution $s=u/y$. Now putting everything together and using the readily checked facts that
$$
\int_0^\infty  \psi_\alpha(z,xv)  e^{-v^pe^{\lambda tp}}v^{np-1-\alpha} \rd v = e^{\lambda t(\alpha-np)} \int_0^\infty  \psi_\alpha(z,e^{-\lambda t}xv)  e^{-v^p}v^{np-1-\alpha} \rd v
$$
and
$$
\int_1^{e^{\lambda t}}\frac{(e^{\lambda tp}-u^p)^n}{n!} u^{-1-\alpha} \rd u = e^{\lambda t(pn-\alpha)} \int^1_{e^{-\lambda t}}\frac{(1-u^p)^n}{n!} u^{-1-\alpha} \rd u
$$
gives the result.
\end{proof}

\begin{proof}[Proof of Theorem \ref{thrm: main bdlp alpha neg}]
Following the proof of Lemma \ref{lemma:second int} to \eqref{eq: for proof OUTS} shows that the characteristic function of the temporally homogenous transition function $P_t(y,\rd x)$ 
is given by 
$\int_{\mathbb R^d}e^{i\langle x,z\rangle} P_t(y,\rd x) = \exp\left\{C_t(y,z)\right\}$, where
\begin{eqnarray*}
C_t(y,z) &=& ie^{-\lambda t}\langle y,z\rangle +  i   \left(1-e^{-\lambda t}\right) \langle  b,z\rangle \\
&&\quad +\int_{\mathbb R^d}\int_0^\infty  \psi_\alpha(z,xv)v^{-1}\int_{v}^{ve^{\lambda t}}  u^{-1-\alpha} e^{-u^p} \rd u \rd v R(\rd x).
\end{eqnarray*}
Noting that the formula on the second line equals
$$
p^{-1}\lambda t \Gamma(|\alpha|/p)R(\mathbb R^d)\int_{\mathbb R^d}\int_0^\infty  \psi_\alpha(z,xv) h_{|\alpha|,p,e^{\lambda t}}(v) \rd v R^1(\rd x)
$$
gives the result.
\end{proof}

\end{document}